\documentclass{amsart}
\usepackage{amsfonts}
\usepackage{latexsym}
\usepackage{amssymb}
\usepackage{amsmath}
\usepackage{color}
\usepackage{bbm}
\usepackage{tikz}
\usepackage{enumerate}


\newcommand{\R}{\mathbb R}
\newcommand{\N}{\mathbb N}

\newcommand{\E}{\mathbb E}
\newcommand{\Pro}{\mathbb P}
\newcommand{\dif}{\,\mathrm{d}}


\newtheorem{thm}{Theorem}[section]
\newtheorem{cor}[thm]{Corollary}
\newtheorem{lemma}[thm]{Lemma}

\newtheorem{proposition}[thm]{Proposition}

\theoremstyle{remark}
\newtheorem*{rmk}{Remark}


\begin{document}


\title[Random variables corresponding to Orlicz norms]{On the Distribution of Random variables corresponding to Musielak-Orlicz norms}

\author[D. Alonso-Guti\'errez]{David Alonso-Guti\'errez}
\address{Departamento de Matem\'aticas, Universidad de Murcia, Campus de Espinar\-do, 30100-Murcia, Spain}
\email{davidalonso@um.es}

\author[S. Christensen]{S\"oren Christensen}
\address{Mathematisches Seminar, Christian Albrechts University Kiel, Ludewig-Meyn-Stra{\ss}e~4, 24098 Kiel, Germany}
\email{christensen@math.uni-kiel.de}

\author[M. Passenbrunner]{Markus Passenbrunner}
\address{Institute of Analysis, Johannes Kepler University Linz,
Altenbergerstra{\ss}e 69, 4040 Linz, Austria} \email{markus.passenbrunner@jku.at}

\author[J. Prochno]{Joscha Prochno}
\address{Institute of Analysis, Johannes Kepler University Linz,
Altenbergerstra{\ss}e 69, 4040 Linz, Austria} \email{joscha.prochno@jku.at}

\keywords{Orlicz function, Orlicz norm, Musielak-Orlicz norm, random variables}
\subjclass[2010]{46B09, 46B07, 46B45, 60B99}

\thanks{The first author is partially supported by MICINN project MTM2010-16679,
MICINN-FEDER project MTM2009-10418 and ``Programa de Ayudas a Grupos de
Excelencia de la Regi\'on de Murcia'', Fundaci\'on S\'eneca,
04540/GERM/06. The third and fourth author are supported by the Austrian Science Fund,
FWF project P23987 ``Projection operators in
Analysis and geometry of classical Banach spaces''.}

\date{\today}

\begin{abstract}
Given a normalized Orlicz function $M$ we provide an easy formula for a distribution such that, if $X$ is a random variable distributed accordingly and $X_1,\ldots,X_n$ are independent copies of $X$, then
\[
\frac{1}{C_p} \|x\|_M \leq \mathbb E \|(x_iX_i)_{i=1}^n\|_p \leq C_p\|x\|_M,
\]
where $C_p$ is a positive constant depending only on $p$. In case $p=2$ we need the function $t\mapsto tM'(t) - M(t)$ to be $2$-concave and as an application immediately obtain an embedding of the corresponding Orlicz spaces into $L_1[0,1]$.
We also provide a general result replacing the $\ell_p$-norm by an arbitrary $N$-norm.
This complements some deep results obtained by Gordon, Litvak, Sch\"utt, and Werner in \cite{GLSW1}. We also prove a result in the spirit of \cite{GLSW1} which is of a simpler form and easier to apply. All results are true in the more general setting of Musielak-Orlicz spaces. 
\end{abstract}

\maketitle

\section{Introduction}

In their outstanding work \cite{KS1}, Kwapie\'n and Sch\"utt obtained beautiful and strong combinatorial inequalities in connection with Orlicz norms that were then used to study certain invariants of Banach spaces (see also \cite{KS2}). The new tool not only allowed them to compute the positive projection constant of a finite-dimensional Orlicz space, but also led to a characterization of the symmetric sublattices of $\ell_1(c_0)$ and the finite-dimensional symmetric subspaces of $\ell_1$. The method was later used in \cite{IS} to determine $p$-absolutely summing norms and was extended by Raynaud and Sch\"utt to infinite-dimensional Banach spaces in \cite{RS} (see also \cite{S2} for applications to Lorentz spaces). In some special cases, the combinatorial expressions were already considered by Gluskin in \cite{G} (see also \cite{S1}). Quite recently, in \cite{PS}, the tools were generalized to obtain new results on the local structure of the classical Banach space $L_1$.

In the great paper \cite{GLSW1}, building upon the combinatorial results from \cite{KS1} and \cite{KS2}, Gordon, Litvak, Sch\"utt and Werner were able to obtain even more general results in the continuous setting. They proved that, if $N$ is an Orlicz function and $X_1,\ldots,X_n$ are independent copies of a random variable $X$, then $\mathbb E \| (x_iX_i)_{i=1}^n \|_N$ is of the order $\|x\|_M$ where $M$ depends on $N$ and the distribution of $X$. This result, of course, is already interesting from a purely probabilistic point of view and was later used by the authors in \cite{GLSW3} to obtain estimates for various parameters associated to the local theory of convex bodies. It also initiated further research and led to beautiful results on order statistics \cite{GLSW2,GLSW}. Recently, in the series of papers \cite{AGP2, AGP, AGP3}, these results were also successfully used to study geometric functionals corresponding to random polytopes.

A natural question that arises is whether the converse is true, i.e., given Orlicz functions $M$ and $N$, can we provide a formula for a distribution so that, if $X_1,\ldots,X_n$ are independent copies of an accordingly distributed random variable $X$, then $\mathbb E \| (x_iX_i)_{i=1}^n \|_N$ is of the order $\|x\|_M$. This is one part of the motivation for our work and we will answer this question in the affirmative. The ``natural'' candidate for the distribution is deduced from a new simpler version of a result from \cite{GLSW1} that we prove here. In the special case of $N(t)=t^p$ we give very easy formulas for the distribution of the random variables depending on the Orlicz function $M$, provided $M$ satisfies a certain condition depending on the parameter $p$. For $p=2$, this condition amounts to the $2$-concavity of $t\mapsto tM'(t) - M(t)$.

In his beautiful paper \cite{S} Sch\"utt proved that, if $M$ is equivalent to a $2$-concave Orlicz function, then the spaces $\ell_M^n$, $n\in\N$ embed uniformly into $L_1$ (see also \cite{BDC} and \cite{P1}). The proof is quite technical and based on combinatorial inequalities, some of them first appeared in the joint work \cite{KS1,KS2} with Kwapie\'n. Given a $2$-concave Orlicz function $M$ with some additional properties, he provided an explicit formula to obtain a sequence $a_1,\ldots,a_n$ of positive real numbers so that for all $x\in\R^n$
\[
c_1 \|x\|_M \leq \frac{1}{n!} \sum_{\pi \in \mathfrak{S}_n} \left( \sum_{i=1}^n |x_ia_{\pi(i)}|^2\right)^{1/2} \leq c_2 \|x\|_M,
\]
where $\mathfrak{S}_n$ is the set of all permutations of the numbers $\{1,\ldots,n\}$ and $c_1,c_2$ are absolute constants (see Theorem 2 in \cite{S}). Khintchine's inequality then implies that these Orlicz spaces embed uniformly into $L_1$. Unfortunately, the formula is rather complicated and it is  non-trivial to calculate the Orlicz function. This, in fact, shall be the other part of our motivation. The converse result we obtain for $p=2$, where we need $t\mapsto tM'(t) - M(t)$ to be $2$-concave, immediately implies that these Orlicz spaces $\ell_M^n$, $n\in \N$ are uniformly isomorphic to subspaces of $L_1$. Although it seems we need a somehow stronger assumption on $M$, the inversion formula we obtain is much simpler and easier to apply. The result might also be useful in finding new and easily verifiable characterizations for more general classes of subspaces of $L^1$.

We provide here two different approaches to prove the converse results (for $\ell_p$-norms and general $N$-norms) where in each one of them conditions on $M$ naturally appear. Even more, if $p=2$ and we do not assume the $2$-concavity of $t\mapsto tM'(t) - M(t)$, but only the equivalence of $\mathbb E \| (x_iX_i)_{i=1}^n \|_2$ and $\|x\|_M$, then it is not hard to see that $t\mapsto tM'(t) - M(t)$ already had to be $2$-concave (see Proposition \ref{thm:general}). Therefore, it seems that the condition is natural and not ``too far'' from the $2$-concavity of $M$.

Our main result is the following:

\begin{thm} \label{main}
Let $1<p<\infty$ and $M\in\mathcal{C}^3$ be an Orlicz function with $M'(0)=0$ and $M''(T)=0$ for $T=M^{-1}(1)$.
Assume the normalization $\int_0^\infty x\dif M'(x)=1$ and that $M|_{[T,\infty)}$ is linear.
Moreover, assume that for all $x>0$
\[
f_X(x)=\Big(1-\frac{2}{p}\Big)\frac{1}{x^3}M''\Big(\frac{1}{x}\Big)-\frac{1}{px^4}M'''\Big(\frac{1}{x}\Big)\geq 0.
\]
Then $f_X$ is a probability density and for all $x\in\R^n$,
\[
c_1(p-1)^{1/p}\|x\|_{M} \leq \E \|(x_i X_i)_{i=1}^n\|_p \leq c_2\|x\|_{M},
\]
where $c_1,c_2$ are positive absolute constants and $X_1,\dots,X_n$ are iid with density $f_X$.
\end{thm}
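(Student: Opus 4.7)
The argument splits naturally into three pieces: verifying that $f_X$ is a probability density, computing the tail of $X$ in closed form, and matching the resulting Orlicz function to $M$ via the ``simpler'' inversion formula announced in the introduction and established earlier in the paper in place of the original result from \cite{GLSW1}.

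\emph{Step 1 (density).} Substitute $u=1/x$ in $\int_0^\infty f_X(x)\,dx$ to obtain
\[
\int_0^\infty f_X(x)\,dx = \int_0^\infty \Bigl(\bigl(1-\tfrac{2}{p}\bigr)uM''(u) - \tfrac{1}{p}u^2M'''(u)\Bigr)\,du.
\]
An integration by parts on the second term, together with the boundary data $M'(0)=0$, $M''(T)=0$ and the linearity of $M$ on $[T,\infty)$ (so that $M''$ and $M'''$ vanish there), yields $\int_0^\infty u^2M'''(u)\,du = -2\int_0^\infty uM''(u)\,du$. Collecting, the coefficients $(1-2/p)$ and $2/p$ add to $1$, leaving $\int_0^\infty uM''(u)\,du = \int_0^\infty u\,dM'(u) = 1$ by the normalization. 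Nonnegativity of $f_X$ is precisely the hypothesis imposed in the theorem.

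\emph{Step 2 (tail).} Applied to $\int_r^\infty f_X(x)\,dx$, the same substitution plus one further integration by parts ($uM''(u) = (uM'(u))' - M'(u)$) gives the explicit formula
\[
\mathbb{P}(X>r) = \frac{M'(1/r)}{r} - M(1/r) - \frac{M''(1/r)}{p\,r^{2}}.
\]
This expresses the tail of $X$ purely in terms of $M$, $M'$ and $M''$ evaluated at $1/r$, and will drive the comparison with $\|x\|_M$ in Step~3.

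\emph{Step 3 (norm equivalence) and main obstacle.} Invoke the ``simpler'' GLSW-type inversion formula proved earlier in the paper: for $X\geq 0$ with tail $\bar F$ one has $\mathbb{E}\|(x_iX_i)\|_p \asymp \|x\|_{\widetilde M}$ with $\widetilde M$ an explicit Orlicz function built from $\bar F$ and $N(t)=t^p$. Substituting the tail from Step~2, the coefficients $(1-2/p)$ and $1/p$ in the definition of $f_X$ are exactly those chosen so that, after the integrations by parts appearing in the inversion formula, $\widetilde M$ collapses to $M$ up to the stated multiplicative constants; the factor $(p-1)^{1/p}$ in the lower bound arises from integrals of the form $\int_0^1 t^{p-2}\,dt$ and H\"older-type estimates intrinsic to that formula. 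This matching is the technical heart of the proof. The positivity hypothesis on $f_X$ is the $p$-analog of the $2$-concavity of $tM'(t)-M(t)$ emphasised in the introduction (which it recovers at $p=2$), and it is precisely what guarantees that the inversion returns a genuine convex Orlicz function. The delicate point is keeping track of constants through the successive integrations by parts so that the sharp factor $(p-1)^{1/p}$---rather than a looser or degenerate dependence on $p$---emerges on the lower side of the equivalence.
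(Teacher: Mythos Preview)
Your approach is correct but takes a different route from the paper's. You propose to feed the tail of $X$ (your Step~2) into the ``simple representation'' result (Theorem~\ref{thm:orlicz_p_norm}) and check by integration by parts that the resulting Orlicz function $\widetilde M$ coincides with $M$; indeed, one computes $\int_0^{1/s}x^pf_X(x)\,dx=\tfrac{1}{p}s^{2-p}M''(s)$, hence $\widetilde M''=M''$ and, with the boundary values, $\widetilde M=M$ \emph{exactly}. The paper instead argues by factorisation: it takes $Y_i$ to be $\log\gamma_{1,p}$, shows in Proposition~\ref{thm:p} that $X_iY_i$ has tail $z^{-1}M'(1/z)-M(1/z)$, and then combines Proposition~\ref{PRO_inverse_maximum} (the $\ell_\infty$ inversion) with Theorem~\ref{THM_lp_normen} to pass from $\max_i|x_iX_iY_i|$ to $\|(x_iX_i)\|_p$. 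Your path is arguably more direct once Theorem~\ref{thm:orlicz_p_norm} is available; the paper's path has the advantage of isolating the product identity $X\cdot Y\stackrel{\mathcal D}{=}Z$, which is what drives the general $N$-norm result in Proposition~\ref{thm:general}.

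One small correction to your Step~3: the factor $(p-1)^{1/p}$ does \emph{not} emerge from ``keeping track of constants through the successive integrations by parts''. Those integrations by parts produce the exact identity $\widetilde M=M$, with no constants at all; the $(p-1)^{1/p}$ is already built into Theorem~\ref{thm:orlicz_p_norm} (ultimately coming from Theorem~\ref{THM_lp_normen}). So the ``delicate point'' you flag is not actually delicate---once you have verified $\widetilde M=M$, you simply read off the inequality with its constants from Theorem~\ref{thm:orlicz_p_norm}.
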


If $M$ is not normalized, we can divide the function $f_X$ by $\int_0^\infty x\dif M'(x)$ to obtain a probability density and  the statement of the theorem is  true with constants depending on $p$ and $M$. Due to the definition of the Orlicz norm, its value is uniquely determined by the values of the function $M$ on the interval $[0,M^{-1}(1)]$. Hence, it is no restriction to extend $M$ linearly.
If $p=2$, this immediately yields the desired embedding of Orlicz spaces into $L_1$ (see Corollary \ref{embedding}). In fact, we will prove the case $p=\infty$ first, which will then imply the result for arbitrary $\ell_p$-norms.

\section{Preliminaries and Notation}\label{Preliminaries}

A convex function $M:[0,\infty)\to[0,\infty)$ where $M(0)=0$ and $M(t)>0$ for $t>0$ is called an \emph{Orlicz function}. 
The $n$-dimensional \emph{Orlicz space} $\ell_M^n$ is $\R^n$ equipped with the norm
  \begin{equation}\label{def:orlicznorm}
    \Vert{x}\Vert_M = \inf \Big\{ \rho>0 \,:\, \sum_{i=1}^n M\left(|x_i|/\rho\right) \leq 1 \Big\}.
  \end{equation}
In case $M(t)=t^p$, $1\leq p<\infty$ we just have $\ell_M^n = \ell_p^n$, i.e., $\Vert\cdot\Vert_M=\Vert\cdot\Vert_p$.
Given Orlicz functions $M_1,\dots,M_n$, we define the corresponding \emph{Musielak-Orlicz function} as $\mathbb M = (M_1,\dots,M_n)$ and the $n$-dimensional \emph{Musielak-Orlicz space} $\ell_{\mathbb M}^n$ is $\R^n$ equipped with the norm
 \[
    \Vert{x}\Vert_{\mathbb M} = \inf \Big\{ \rho>0 \,:\, \sum_{i=1}^n M_i\left(|x_i|/\rho\right) \leq 1 \Big\}.
 \]
If $M_i=M$ for all $i=1,\ldots,n$, then $\ell_{\mathbb M}^n = \ell_M^n$. 
We say that two Orlicz functions $M$ and $N$ are equivalent
if there are positive constants $a$ and $b$ such that for all
$t\geq0$
\[
 a^{-1}M( b^{-1}t) \leq N(t) \leq aM(bt).
\]
If two Orlicz functions are equivalent so are their norms. An Orlicz function is said to be \emph{$p$-concave} for some $1\leq p<\infty$, if $t\mapsto M(t^{1/p})$ is a concave function. 
We say that an Orlicz function $M$ is \emph{normalized} if 
\[
\int_0^\infty x\dif M'(x)=1.
\]
Note also that, if two Orlicz functions are equivalent in a neighborhood of zero, then the corresponding sequence spaces already coincide \cite[Proposition 4.a.5]{LT1}. For a detailed and thorough introduction to the theory of Orlicz spaces we refer the reader to \cite{KR}, \cite{RR} or \cite{LT1,LT2} and to \cite{M} in the case of Musielak-Orlicz spaces.  

Let $X$ and $Y$ be isomorphic Banach spaces. We say that they are
\emph{$C$-isomorphic} if there is an isomorphism $T:X\rightarrow Y$ with
$\|T\|\|T^{-1}\|\leq C$.
We define the Banach-Mazur distance of $X$ and $Y$ by
    \[
      d(X,Y) = \inf\left\{ \|T\|\|T^{-1}\| \,:\, T\in L(X,Y) ~ \hbox{isomorphism} \right\}.
    \]
 Let $(X_n)_n$ be a sequence of $n$-dimensional normed spaces and let $Z$ also be a normed space. If there exists a constant $C>0$, such that for all $n\in\N$ there exists a normed space $Y_n \subseteq Z$ with $\dim(Y_n)=n$ and $d(X_n,Y_n)\leq C$, then we say $(X_n)_n$ \emph{embeds uniformly} into $Z$. The beautiful monograph \cite{TJ} gives a detailed introduction to the concept of Banach-Mazur distances.

We will use the notation $A\sim B$ to indicate the existence of two positive absolute constants $c_1,c_2$ such that $c_1A\leq B\leq c_2A$. Similarly, we define the symbol $\lesssim$. We write $\sim_p$, with some positive constant $p$, to indicate that the constants $c_1$ and $c_2$ depend on $p$.  $c_1, c_2, c, C,\dots$ will always denote positive absolute constants whose value may change from line to line.

By $L_1$ we denote the $L_1$ space on the unit interval $[0,1]$ with Lebesgue measure.

We write $f\in \mathcal C^k$ for some $k\in\N$, whenever the function $f$ is $k$ times continuously differentiable and $\mathcal C^k(a,b)$ for $\mathcal C^k((a,b))$.

The following theorem was obtained in \cite{GLSW2} and provides a formula for the Orlicz function $M$ provided that we know the distribution of $X$:

\begin{thm}{\rm(}\cite[Lemma 5.2]{GLSW2}{\rm)}.\label{thm:orlicz}
Let $X_1,\dots X_n$ be iid integrable random variables. For all $s\geq 0$ define
\[
M(s)=\int_0^s \int_{1/t\leq |X_1|} |X_1|\,\dif \mathbb P \dif t.
\]
Then, for all $x=(x_i)_{i=1}^n\in \mathbb{R}^n$,
\[
c_1\|x\|_M\leq \mathbb{E}\max_{1\leq i\leq n}|x_iX_i|\leq c_2\|x\|_M,
\]
where $c_1,c_2$ are absolute constants independent of the distribution of $X_1$.
\end{thm}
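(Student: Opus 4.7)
The plan is to simplify $M$ via Fubini, connect $\|x\|_M$ to the tails of $|x_iX_i|$ through a layer-cake computation, and then invoke the classical integral representation of the expected maximum of independent non-negative random variables.

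First, apply Fubini to the defining integral. On $\{|X_1|>0\}$, $\int_0^s \mathbbm{1}_{t\geq 1/|X_1|}\,\dif t = (s-1/|X_1|)_+$, so
\[
M(s) \;=\; \E\big[|X_1|(s-1/|X_1|)_+\big] \;=\; \E\big(s|X_1|-1\big)_+.
\]
From this closed form, $M'(s)=\E\bigl[|X_1|\mathbbm{1}_{s|X_1|\geq 1}\bigr]$ is non-decreasing in $s$ and $M(0)=0$, so $M$ is indeed an Orlicz function. Combining the iid structure with the layer-cake identity $\E(Y-1)_+=\int_0^\infty \Pro(Y>1+t)\,\dif t$ yields
\[
\sum_{i=1}^n M(|x_i|/\rho) \;=\; \frac{1}{\rho}\int_\rho^\infty \sum_{i=1}^n \Pro(|x_iX_i|>u)\,\dif u,
\]
so that $\rho_0:=\|x\|_M$ is characterized (under mild regularity of the distribution) by $\int_{\rho_0}^\infty \sum_i \Pro(|x_iX_i|>u)\,\dif u = \rho_0$.

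Next, invoke the standard equivalence
\[
\E\max_{i\leq n} Y_i \;\sim\; \int_0^\infty \min\Big(1,\,\sum_{i=1}^n \Pro(Y_i>u)\Big)\,\dif u,
\]
valid for independent non-negative random variables $Y_i$. The upper direction is the union bound $\Pro(\max_i Y_i>u)\leq \min(1,\sum_i \Pro(Y_i>u))$; the lower direction follows from the elementary fact that $1-\prod_i(1-p_i)\geq \tfrac12\min(1,\sum_i p_i)$, which on $\{\sum p_i\leq 1\}$ uses $1-e^{-x}\geq x/2$ for $x\in[0,1]$ and on $\{\sum p_i>1\}$ uses $\prod_i(1-p_i)\leq e^{-\sum p_i}<1/2$.

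It remains to show that $I:=\int_0^\infty \min\bigl(1,\sum_i \Pro(|x_iX_i|>u)\bigr)\,\dif u$ is comparable to $\rho_0$. Splitting the integral at $\rho_0$, bounding the integrand by $1$ on $[0,\rho_0]$ and by $\sum_i \Pro(|x_iX_i|>u)$ on $[\rho_0,\infty)$, and using the characterization of $\rho_0$, gives $I\leq \rho_0+\rho_0=2\rho_0$. For the matching lower bound, let $\tau:=\inf\{u:\sum_i \Pro(|x_iX_i|>u)\leq 1\}$, so $I=\tau+\int_\tau^\infty \sum_i \Pro(|x_iX_i|>u)\,\dif u$. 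If $\tau\geq \rho_0$, then $I\geq \tau\geq \rho_0$ directly; if $\tau<\rho_0$, then $I\geq \int_{\rho_0}^\infty \sum_i \Pro(|x_iX_i|>u)\,\dif u=\rho_0$. The main delicacy lies precisely in this lower bound: in the heavy-tail regime one may have $\tau\ll\rho_0$, so one cannot simply read off $\E\max\sim\tau$, but the two-case split above resolves it cleanly.
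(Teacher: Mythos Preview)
The paper does not prove this statement; it is quoted from \cite{GLSW2} and used as a black box. So there is no ``paper's own proof'' to compare against, and I will simply assess your argument on its merits.

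Your proof is correct and is essentially the standard route to this kind of result. The identity $M(s)=\E(s|X_1|-1)_+$ via Fubini, the layer-cake rewriting $\sum_i M(|x_i|/\rho)=\rho^{-1}\int_\rho^\infty \sum_i \Pro(|x_iX_i|>u)\,\dif u$, the equivalence $\E\max_i Y_i\sim\int_0^\infty\min(1,\sum_i\Pro(Y_i>u))\,\dif u$ for independent non-negative $Y_i$, and the final two-case split at $\rho_0$ and $\tau$ all go through exactly as you wrote them. The elementary inequality $1-\prod_i(1-p_i)\geq\tfrac12\min(1,\sum_i p_i)$ is verified correctly in both regimes.

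Two small remarks. First, your hedge ``under mild regularity of the distribution'' is unnecessary: since $M$ is finite and convex on $[0,\infty)$ it is continuous, and $\rho\mapsto\sum_i M(|x_i|/\rho)$ is continuous and strictly decreasing on $(0,\infty)$ for $x\neq 0$, so the infimum defining $\|x\|_M$ is attained with equality and the characterization $\int_{\rho_0}^\infty\sum_i\Pro(|x_iX_i|>u)\,\dif u=\rho_0$ holds exactly. Second, by the paper's definition an Orlicz function must satisfy $M(t)>0$ for $t>0$; your $M$ may vanish on an interval $[0,1/\mathrm{ess\,sup}|X_1|]$ if $|X_1|$ is bounded, but this does not affect the norm or the argument.
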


Obviously, the function
\begin{equation}\label{EQU Orlicz function M}
  M(s)=\int_0^s \int_{1/t\leq |X_1|} |X_1| \, \dif\mathbb P \, \dif t
\end{equation}
is non-negative and convex, since $\int_{1/t\leq |X|}|X| \,d\mathbb P$ is increasing in $t$. Furthermore, we have that $M$ is continuous, differentiable and $M(0)=M'(0)=0$.

Note that, in fact, Theorem \ref{thm:orlicz} is true for Musielak-Orlicz spaces when we do not assume the random variables to be identically distributed: 

\begin{thm}\label{thm:orliczgeneral}
Let $X_1,\dots X_n$ be independent integrable random variables. For all $s\geq 0$ and all $j=1,\ldots,n$ define
\[
M_j(s)=\int_0^s \int_{1/t\leq |X_j|} |X_j|\,\dif \mathbb P \dif t.
\]
Then, for all $x=(x_i)_{i=1}^n\in \mathbb{R}^n$,
\[
c_1\|x\|_{\mathbb M}\leq \mathbb{E}\max_{1\leq i\leq n}|x_iX_i|\leq c_2\|x\|_{\mathbb M},
\]
where $c_1,c_2$ are absolute constants and $\mathbb M = (M_1,\dots,M_n)$.
\end{thm}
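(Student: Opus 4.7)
The plan is to adapt the proof of Theorem~\ref{thm:orlicz} (Lemma~5.2 of \cite{GLSW2}), keeping the individual Orlicz functions $M_i$ throughout, and observing that the crucial step uses only \emph{independence} of the $X_i$, not identical distribution. Writing $Y_i = |x_iX_i|$, the layer-cake identity $\mathbb{E}\max_i Y_i = \int_0^\infty \mathbb{P}(\max_i Y_i > t)\dif t$, independence in the form $\mathbb{P}(\max_i Y_i > t) = 1 - \prod_i(1-\mathbb{P}(Y_i > t))$, and the elementary two-sided bound
\[
  \tfrac{1}{2}\min\Bigl(1,\sum_i p_i\Bigr) \leq 1-\prod_i(1-p_i) \leq \min\Bigl(1,\sum_i p_i\Bigr), \qquad p_i\in[0,1],
\]
together yield
\[
  \mathbb{E}\max_{1\leq i\leq n}|x_iX_i| \sim \int_0^\infty \min\Bigl(1,\sum_{i=1}^n\mathbb{P}(|x_iX_i|>t)\Bigr)\dif t
\]
with absolute constants.

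The next step is the key identity connecting this integral to the Musielak-Orlicz norm. Applying Fubini in the definition of $M_i$ and using the standard representation $\int_{|X_i|\geq a}|X_i|\dif\mathbb{P} = a\,\mathbb{P}(|X_i|\geq a)+\int_a^\infty \mathbb{P}(|X_i|>u)\dif u$, one obtains the clean tail formula
\[
  M_i(s) = s\int_{1/s}^\infty \mathbb{P}(|X_i|>u)\dif u,\qquad s>0.
\]
Setting $\rho = \|x\|_{\mathbb M}$, so that $\sum_i M_i(|x_i|/\rho) = 1$ by continuity, and computing $\int_\rho^\infty \mathbb{P}(|x_iX_i|>t)\dif t = |x_i|\int_{\rho/|x_i|}^\infty \mathbb{P}(|X_i|>u)\dif u = \rho\, M_i(|x_i|/\rho)$ via $u = t/|x_i|$, one arrives at the identity
\[
  \int_\rho^\infty \sum_{i=1}^n \mathbb{P}(|x_iX_i|>t)\dif t = \rho\sum_{i=1}^n M_i(|x_i|/\rho) = \rho.
\]

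With this identity both bounds on the reduced integral are immediate. For the upper bound, split at $t=\rho$: the piece on $[0,\rho]$ is trivially $\leq\rho$, and the piece on $[\rho,\infty)$ is $\leq \int_\rho^\infty \sum_i \mathbb{P}(|x_iX_i|>t)\dif t = \rho$, giving a total of at most $2\rho$. For the lower bound, let $T = \inf\{t:\sum_i\mathbb{P}(|x_iX_i|>t)\leq 1\}$; if $T\geq\rho$, the integrand equals $1$ on $[0,T)$ so the integral is $\geq T \geq \rho$, while if $T<\rho$, then on $[T,\infty)$ one has $\min(1,\sum)=\sum$ and monotonicity of the tails gives $\int_T^\infty \sum_i\mathbb{P}(|x_iX_i|>t)\dif t \geq \int_\rho^\infty \sum_i\mathbb{P}(|x_iX_i|>t)\dif t = \rho$. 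In either case the integral is $\geq\rho$, which combined with the reduction above yields $\mathbb{E}\max_i|x_iX_i| \sim \|x\|_{\mathbb M}$ with absolute constants.

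The main technical point is the Fubini calculation producing the tail representation of $M_i$; once in hand, every remaining estimate is an elementary manipulation of the sum of tails, and the resulting constants coincide with those already obtained in the iid case of Theorem~\ref{thm:orlicz}.
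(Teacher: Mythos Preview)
Your proof is correct and is exactly what the paper has in mind: the paper does not give a detailed argument for this theorem but simply states that it follows by a ``straightforward adaption of the proof of Theorem~\ref{thm:orlicz}'' (the iid case from \cite{GLSW2}), and you have carried out precisely that adaptation. The layer-cake reduction via independence, the elementary two-sided bound for $1-\prod_i(1-p_i)$, the tail formula $M_i(s)=s\int_{1/s}^\infty \mathbb{P}(|X_i|>u)\,\dif u$, and the splitting at $\rho=\|x\|_{\mathbb M}$ are all standard ingredients of that proof, and none of them uses identical distribution.
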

A proof in the case of averages over permutations can be found in \cite{P} and can be generalized to our setting by a straightforward adaption of the proof of Theorem \ref{thm:orlicz}.

\begin{rmk}
Because of Theorem \ref{thm:orliczgeneral}, all results presented in this paper hold in the more general setting of Musielak-Orlicz spaces, but for notational convenience we state them only for Orlicz spaces.
\end{rmk}

\begin{rmk}
If $M$ is an Orlicz function such that $M\in \mathcal C^3$, then for $t\mapsto tM'(t)-M(t)$ to be $2$-concave is equivalent to $M'''\leq 0$. Therefore, and for the sake of convenience, we will later assume $M'''\leq 0$, but might still talk about the $2$-concavity of $t\mapsto tM'(t)-M(t)$ at the same time.
\end{rmk}

We will also need a result from \cite{PR} about the generating distribution of $\ell_p$-norms. We recall that the density of a $\log \gamma_{1,p}$ distributed random variable $\xi$ with parameters $p>0$ is given by
\[
f_{\xi}(x) = px^{-p-1}\mathbbm 1_{[1,\infty)}(x).
\]
Note also that for all $x>0$
\[
\mathbb P \left(\xi \geq x \right) = \min(1,x^{-p}).
\]

\begin{thm}{\rm(}\cite[Theorem 3.1]{PR}{\rm)}.\label{THM_lp_normen}
  Let $p>1$ and $\xi_1,...,\xi_n$ be iid copies of a $\log \gamma_{1,p}$ distributed random variable $\xi$. Then, for all $x\in\R^n$,
    \[
      c_1 \|x\|_{p} \leq \mathbb E \max_{1\leq i \leq n} | x_i\xi_i | \leq \frac{c_2}{(p-1)^{1/p}} \|x\|_{p},
    \]
    where $c_1,c_2$ are positive absolute constants.
 \end{thm}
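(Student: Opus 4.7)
The plan is to invoke Theorem~\ref{thm:orlicz} applied to the iid sequence $\xi_1,\dots,\xi_n$ and then show that the associated Orlicz function $M$ is comparable to $s^p$ with exactly the constants appearing in the statement: an absolute constant on the lower side and the factor $(p-1)^{-1/p}$ on the upper side.

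First I would compute $M$ from~\eqref{EQU Orlicz function M}. Since $f_\xi(u)=pu^{-p-1}\mathbbm{1}_{[1,\infty)}(u)$, the inner integral evaluates to
\[
\int_{\{1/t\leq|\xi_1|\}}|\xi_1|\dif\mathbb P = p\int_{\max(1,1/t)}^\infty u^{-p}\dif u = \frac{p}{p-1}\min\bigl(1,t^{p-1}\bigr),
\]
and a second integration in $t$ produces the explicit form $M(s)=s^p/(p-1)$ on $[0,1]$ and $M(s)=(ps-p+1)/(p-1)$ on $[1,\infty)$. The tangent-line inequality $s^p\geq 1+p(s-1)$ (valid for $s\geq1$ by convexity of $s\mapsto s^p$) then yields $M(s)\leq s^p/(p-1)$ for every $s\geq 0$. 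Substituting $\rho=\|x\|_p/(p-1)^{1/p}$ into the definition~\eqref{def:orlicznorm} gives $\sum_i M(|x_i|/\rho)\leq\|x\|_p^p/((p-1)\rho^p)=1$, hence $\|x\|_M\leq\|x\|_p/(p-1)^{1/p}$, and the upper half of the theorem follows from Theorem~\ref{thm:orlicz}.

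For the lower bound I would split on the ratio $\|x\|_\infty/\|x\|_p$. If $\|x\|_\infty\leq\|x\|_p/2$, the test value $\rho=\|x\|_p/2$ keeps every $|x_i|/\rho$ inside $[0,1]$, where $M$ equals the pure power; summing gives $\sum_i M(|x_i|/\rho)=2^p/(p-1)\geq 1$ for every $p>1$, so $\|x\|_M\geq\|x\|_p/2$. In the complementary regime $\|x\|_\infty>\|x\|_p/2$ I would fall back on the monotonicity estimate $\|x\|_M\geq\|x\|_\infty/M^{-1}(1)$; solving $M(T)=1$ explicitly gives $T=(p-1)^{1/p}$ for $p\leq 2$ and $T=2(p-1)/p$ for $p\geq 2$, so $M^{-1}(1)\leq 2$ uniformly and $\|x\|_M\geq\|x\|_\infty/2\geq\|x\|_p/4$. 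A final application of Theorem~\ref{thm:orlicz} then delivers $\mathbb E\max_i|x_i\xi_i|\gtrsim\|x\|_p$. The main technical point is this case split: the affine tail of $M$ forces one to lose the pure power estimate exactly when a single coordinate dominates, and in that regime the trivial one-coordinate bound $\|x\|_M\gtrsim\|x\|_\infty$ is already strong enough to recover $\|x\|_p$ up to an absolute constant.
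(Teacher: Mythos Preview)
Your argument is correct. The computation of $M$ from~\eqref{EQU Orlicz function M} is right, the upper bound via the tangent inequality $s^p\geq 1+p(s-1)$ is clean, and the case split for the lower bound works: in the well-spread case the pure power piece of $M$ gives $\sum_i M(2|x_i|/\|x\|_p)=2^p/(p-1)\geq 1$, while in the concentrated case the single-coordinate bound $\|x\|_M\geq\|x\|_\infty/M^{-1}(1)$ together with $M^{-1}(1)\leq 2$ recovers $\|x\|_p$ up to the factor~$4$.

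There is, however, nothing in the paper to compare your proof to. Theorem~\ref{THM_lp_normen} is quoted from \cite[Theorem~3.1]{PR} and is used here as a black box; the present paper gives no proof of it. Your route---feeding the $\log\gamma_{1,p}$ density into Theorem~\ref{thm:orlicz} and then comparing the resulting piecewise Orlicz function to $s^p$---is exactly the natural one from the point of view of this paper's toolkit, and indeed the remark after Proposition~\ref{PRO_inverse_maximum} points at the same construction (pass to $\widetilde M(t)=t^p$ on $[0,(p-1)^{-1/p}]$ extended linearly). So your proof is a self-contained verification, using only results already available in the paper, of a statement the authors chose to cite rather than reprove.
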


 Recall the following well-known theorem about the existence of independent random variables corresponding to given distributions:

 \begin{thm}{\rm(}\cite[Theorem 20.4]{B}{\rm)}.\label{thm_measure}
 Let $(\mu_j)_j$ be a finite or infinite sequence of probability measure on the real line. Then there exists an independent sequence of random variables $(\xi_j)_j$ defined on the probability space $([0,1],\mathfrak{B}_\R,\lambda)$, with Borel $\sigma$-algebra $\mathfrak{B}_\R$ and Lebesgue measure $\lambda$, so that the distribution of $\xi_j$ is $\mu_j$.
 \end{thm}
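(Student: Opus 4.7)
The plan is to reduce to the case of independent uniform random variables on $[0,1]$ and then apply the quantile transform. I would carry this out in two steps.

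First, I would construct a sequence $(U_j)_j$ of independent random variables, each uniformly distributed on $[0,1]$, all defined on $([0,1],\mathfrak B_\R,\lambda)$. To do this, expand $\omega\in[0,1]$ in binary, $\omega=\sum_{k=1}^\infty d_k(\omega)\,2^{-k}$, choosing the non-terminating expansion for dyadic rationals (a Lebesgue null set, so the choice is irrelevant). A standard argument shows that the digit functions $d_k:[0,1]\to\{0,1\}$ are i.i.d.\ Bernoulli$(1/2)$ under $\lambda$. Fix any bijection $\phi:\N\times\N\to\N$ and define
\[
U_j(\omega)=\sum_{i=1}^\infty d_{\phi(i,j)}(\omega)\,2^{-i}, \qquad j\in\N.
\]
Since the family $\{d_{\phi(i,j)}\}_{i,j}$ is independent, the blocks corresponding to different $j$ are independent, and each $U_j$, being a binary series of independent Bernoulli$(1/2)$ digits, is uniform on $[0,1]$. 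Thus $(U_j)$ is an i.i.d.\ sequence of uniform variables on the required probability space. (If the given sequence $(\mu_j)$ is finite, one simply truncates.)

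Second, I would use the quantile transform to turn each $U_j$ into a variable with law $\mu_j$. For a probability measure $\mu_j$ on $\R$ with distribution function $F_j(x)=\mu_j((-\infty,x])$, define the generalized inverse
\[
F_j^{-1}(u)=\inf\{x\in\R\,:\,F_j(x)\geq u\},\qquad u\in(0,1),
\]
and set $\xi_j:=F_j^{-1}(U_j)$. A routine verification gives the equivalence $F_j^{-1}(u)\leq x\iff u\leq F_j(x)$, from which
\[
\lambda(\xi_j\leq x)=\lambda(U_j\leq F_j(x))=F_j(x),
\]
so $\xi_j$ has law $\mu_j$. Since each $\xi_j$ is a Borel function of $U_j$ alone, independence of the $U_j$ transfers to independence of the $\xi_j$; this uses only that measurable images of independent variables remain independent.

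The main technical point is the first step, namely producing countably many independent uniform variables from a single Lebesgue sample. Once this is in place, the quantile construction is essentially formal. Apart from bookkeeping around the dyadic-rational null set in the binary expansion, there is no real obstacle; the result is the classical ``canonical construction'' and is stated here purely to have the target probability space $([0,1],\mathfrak B_\R,\lambda)$ fixed for later use.
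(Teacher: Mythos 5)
Your argument is the standard "canonical construction" — binary digits of a Lebesgue sample split via a bijection $\phi:\N\times\N\to\N$ into disjoint blocks giving iid uniforms, followed by the quantile transform $\xi_j=F_j^{-1}(U_j)$ — and it is correct. The paper cites this result to Billingsley (Theorem 20.4) without reproving it, and Billingsley's own proof is exactly this construction, so your approach coincides with the referenced one.
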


\section{A simple Representation Result}\label{sec:simple}

In this section we prove a result of the same spirit as Theorem \ref{thm:orlicz}, where we replace the $\ell_\infty$-norm by some $\ell_p$-norm for $1< p <\infty $. This is a special case of Theorem 1 in \cite{GLSW1} with $N(t)=t^p$. There it seems unclear how to determine the ``precise'' form of the Orlicz function that appears. Of course, this is somehow unsatisfactory and, therefore, we provide a result that produces a ``simple'' representation of this Orlicz function. Observe also that the following result, which is a consequence of Theorems \ref{thm:orlicz} and \ref{THM_lp_normen}, corresponds to the discrete results recently obtained in \cite{PS}.

\begin{thm} \label{thm:orlicz_p_norm}
Let $1<p<\infty$, $X_1,\ldots,X_n$ be iid integrable random variables. For all $s\geq0$ define
\[
M(s) = \frac{p}{p-1}\int_0^s\left( \int_{|X_1| \leq \frac{1}{t}} t^{p-1} \left| X_1 \right|^p \dif \Pro + \int_{ |X_1| > 1/t}|X_1| \dif \mathbb P \right)\dif t .
\]
Then, for all $x\in\R^n$,
\[
 c_1 (p-1)^{1/p} \| x \|_M \leq \mathbb E \| (x_iX_i)_{i=1}^n \|_p \leq c_2 \| x \|_M,
\]
where $c_1,c_2,$ are positive absolute constants.
\end{thm}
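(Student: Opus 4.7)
The plan is to introduce an auxiliary family of independent $\log\gamma_{1,p}$ random variables $\xi_1,\dots,\xi_n$, defined on a product probability space and independent of the $X_i$, and to sandwich $\mathbb E\max_i |x_iX_i\xi_i|$ between $\mathbb E\|(x_iX_i)_{i=1}^n\|_p$ (via Theorem~\ref{THM_lp_normen}) and $\|x\|_M$ (via Theorem~\ref{thm:orlicz} applied to the iid integrable product variables $Y_i:=X_i\xi_i$). The identification with the stated formula for $M$ then reduces to one explicit computation of a conditional expectation.

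First, apply Theorem~\ref{THM_lp_normen} conditionally on $(X_1,\dots,X_n)$ to the vector $(x_iX_i)_{i=1}^n$ and the independent copies $\xi_i$. Taking the expectation with respect to the $X_i$ afterwards yields
\[
c_1\,\mathbb E\|(x_iX_i)_{i=1}^n\|_p \;\leq\; \mathbb E\max_{1\leq i\leq n}|x_iX_i\xi_i| \;\leq\; \frac{c_2}{(p-1)^{1/p}}\,\mathbb E\|(x_iX_i)_{i=1}^n\|_p.
\]
On the other hand, the random variables $Y_i=X_i\xi_i$ are iid and integrable (since $\mathbb E|\xi_1|=p/(p-1)<\infty$ for $p>1$), so Theorem~\ref{thm:orlicz} gives
\[
c_1\|x\|_{\widetilde M}\;\leq\;\mathbb E\max_{1\leq i\leq n}|x_iY_i|\;\leq\;c_2\|x\|_{\widetilde M},
\qquad
\widetilde M(s):=\int_0^s\mathbb E\bigl[|Y_1|\mathbbm 1_{|Y_1|\geq 1/t}\bigr]\dif t.
\]

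The main (and essentially the only) computational step is to verify $\widetilde M=M$. Since $\xi_1\geq 1$ a.s., conditioning on $X_1$ and using $|Y_1|\geq 1/t\Leftrightarrow \xi_1\geq 1/(t|X_1|)$ together with the explicit tail $\mathbb E[\xi\mathbbm 1_{\xi\geq a}]=\frac{p}{p-1}\max(1,a)^{1-p}$ splits the inner integral according to whether $|X_1|\geq 1/t$ or $|X_1|<1/t$, giving
\[
\mathbb E\bigl[|Y_1|\mathbbm 1_{|Y_1|\geq 1/t}\bigr]
=\frac{p}{p-1}\Bigl(\int_{|X_1|>1/t}|X_1|\dif\mathbb P+t^{p-1}\!\!\int_{|X_1|\leq 1/t}|X_1|^p\dif\mathbb P\Bigr).
\]
Integrating in $t$ from $0$ to $s$ reproduces exactly the formula defining $M$.

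Finally, chaining the two sandwiches through $\mathbb E\max_i|x_iX_i\xi_i|=\mathbb E\max_i|x_iY_i|$ yields
\[
c_1(p-1)^{1/p}\|x\|_M\;\leq\;\mathbb E\|(x_iX_i)_{i=1}^n\|_p\;\leq\;c_2\|x\|_M,
\]
as required. The only point where care is needed is the existence of the joint probability space carrying the $X_i$ and the independent $\xi_i$, which is guaranteed by Theorem~\ref{thm_measure}; everything else is an application of the quoted theorems plus the integral identity above.
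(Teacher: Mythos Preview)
Your proposal is correct and follows essentially the same approach as the paper: introduce independent $\log\gamma_{1,p}$ variables $\xi_i$, sandwich $\mathbb E\max_i|x_iX_i\xi_i|$ between $\mathbb E\|(x_iX_i)\|_p$ via Theorem~\ref{THM_lp_normen} and $\|x\|_M$ via Theorem~\ref{thm:orlicz}, and then identify $M$ by computing $\mathbb E[|X_1\xi_1|\mathbbm 1_{|X_1\xi_1|\ge 1/t}]$ through the case split $|X_1|\le 1/t$ versus $|X_1|>1/t$. The only cosmetic difference is that the paper writes out the product space $(\Omega_1,\Pro_1)\times(\Omega_2,\Pro_2)$ explicitly rather than invoking Theorem~\ref{thm_measure}, and performs the inner integral over $\xi$ directly from the density instead of quoting the tail moment formula $\mathbb E[\xi\mathbbm 1_{\xi\ge a}]=\tfrac{p}{p-1}\max(1,a)^{1-p}$.
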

\begin{proof}
Let $X_1,\dots,X_n$ be defined on $(\Omega_1,\Pro_1)$ and let $\xi_1,\dots,\xi_n$ be independent copies of a $\log\gamma_{1,p}$ distributed random variable $\xi$, say on $(\Omega_2,\mathbb P_2)$. Then, by Theorem \ref{THM_lp_normen},
\[
\mathbb E_{\Omega_1} \|(x_iX_i)_{i=1}^n\|_p\lesssim \mathbb E_{\Omega_1} \mathbb E_{\Omega_2} \max_{1\leq i \leq n}|x_i X_i\xi_i| \lesssim (p-1)^{-1/p} \mathbb E_{\Omega_1} \|(x_iX_i)_{i=1}^n\|_p,
\]
holds for all $x\in\R^n$.
On the other hand, by Theorem \ref{thm:orlicz},
\[
\mathbb E_{\Omega_1} \mathbb E_{\Omega_2} \max_{1\leq i \leq n}|x_i X_i\xi_i| \sim \|x\|_{M}
\]
for all $x\in\R^n$, where
\[
M(s)=\int_0^s \int_{1/t \leq |X_1\xi|} |X_1\xi|\,\dif \mathbb P \dif t.
\]
For $t>0$ and $\omega_1\in\Omega_1$ define
\[
I_{\omega_1} := \left\{\omega_2\in\Omega_2 \,:\, t |\xi(\omega_2) X_1(\omega_1)| \geq 1\right\}.
\]
Now, we observe that
\begin{align*}
M(s) & = \int_0^s \int_{\Omega_1} \int_{I_{\omega_1}} |X_1(\omega_1)\xi(\omega_2)|\,\dif \mathbb P_2(\omega_2) \dif \mathbb P_1(\omega_1) \dif t \\
& = \int_0^s \int_{\Omega_1} |X_1(\omega_1)| \int_{I_{\omega_1}}  |\xi(\omega_2)|\,\dif \mathbb P_2(\omega_2) \dif \mathbb P_1(\omega_1) \dif t .\\
\end{align*}
Let us take a closer look at the inner integral. Fix $t>0$ and $\omega_1\in\Omega_1$ and recall that the density of $\xi$ is
\[
f_{\xi}(x) = px^{-p-1}\mathbbm 1_{[1,\infty)}(x).
\]
Therefore, if $t|X_1(\omega_1)| \leq 1$,
\[
 \int_{I_{\omega_1}} |\xi(\omega_2)|\,\dif \mathbb P_2(\omega_2) = p \int_{\{z\,:\, zt|X_1(\omega_1)| \geq 1\}} z^{-p} \dif z = \frac{p}{p-1} (t |X_1|)^{p-1}.
\]
Now assume that $t|X_1(\omega_1)| \geq 1$. Then we get
\[
 \int_{I_{\omega_1}} |\xi(\omega_2)|\,\dif \mathbb P_2(\omega_2) = \mathbb E |\xi| = \frac{p}{p-1}.
\]
Hence, by splitting the integral over $\Omega_1$, for fixed $t$ we have
\begin{eqnarray*}
&&\int_{\Omega_1}  \int_{I_{\omega_1}} |X_1(\omega_1)\xi(\omega_2)|\,\dif \mathbb P_2(\omega_2) \dif \mathbb P_1(\omega_1)\\
& = &\frac{p}{p-1} \int_{ |X_1| \leq 1/t} t^{p-1} \left| X_1 \right|^p\dif \mathbb P_1(\omega_1) + \frac{p}{p-1} \int_{ |X_1| > 1/t}|X_1|\dif \mathbb P_1(\omega_1).
\end{eqnarray*}
This implies the result.
\end{proof}

Note that by Fubini's theorem,
\begin{align*}
\int_0^s \int_{0}^{\frac{1}{t}} t^{p-1}|x|^p \dif \Pro_{X_1}(x) \dif t & = \int_{\frac{1}{s}}^\infty |x|^p\int_{0}^{|x|^{-1}} t^{p-1} \dif t \dif \Pro_{X_1}(x) \\
& = \frac{1}{p} \Pro\left( |X_1| \geq s^{-1} \right)   \leq \frac{1}{p},
\end{align*}
and, hence, the limit case in Theorem \ref{thm:orlicz_p_norm} for $p\to \infty$ coincides with Theorem \ref{thm:orlicz}.

Observe also that Theorem \ref{thm:orlicz_p_norm} provides a natural candidate for the probability density that appears in Theorem \ref{main}:

If the random variables $|X_1|,\dots,|X_n|$ have a density $f_X$, then
\[
M''(s) = ps^{p-2}\int_{0}^{s^{-1}} x^pf_X(x) \dif x,
\]
that is,
\[
\int_{0}^{s^{-1}} x^pf_X(x) \dif x = \frac{1}{p}s^{2-p}M''(s).
\]
Therefore, differentiating once again,
\[
f_X(s^{-1}) = \left( 1-\frac{2}{p}\right) s^3M''(s) - \frac{1}{p}s^4M'''(s).
\]

In the following section we will prove Theorem \ref{main} in the case $p=\infty$. We then reduce the case of general $p$ to the case $p=\infty$ in Section \ref{general_p}.

\section{The case of the $\ell_\infty$-norm}\label{sec:infinity}

To obtain the case of $\ell_p$-norms it is enough to settle the question for the $\ell_\infty$-norm. We will give a short explanation of that fact:

Assume that $N$ is an arbitrary Orlicz function and we know how to choose a distribution (depending on $N$) so that, if $\xi_1,\ldots,\xi_n$ are independent random variables distributed according to that law, then, for all $x=(x_i)_{i=1}^n\in\R^n$,
\[
\mathbb E \max_{1\leq i \leq n} \left| x_i \xi_i\right| \sim \| x\|_N.
\]
Now, let $M$ be the normalized Orlicz function given in Theorem \ref{main}. We want to find a distribution and independent random variables $X_1,\ldots,X_n$ defined on a measure spaces $(\Omega_1,\Pro_1)$ distributed according to this such that
\begin{equation}\label{equ_ell_p_norm}
\mathbb E_{\Omega_1} \| (x_i X_i)_{i=1}^n\|_p \sim_p \|x\|_M.
\end{equation}
Of course, we can find a distribution and accordingly distributed independent random variables $Z_1,\ldots,Z_n$ so that
\[
\mathbb E  \max_{1\leq i \leq n} \left| x_iZ_i \right| \sim \| x \|_M,
\]
since we can just take $N=M$.
On the other hand, observe that
\[
\mathbb E_{\Omega_1} \| (x_i X_i)_{i=1}^n\|_p \sim_p \mathbb E_{\Omega_1} \mathbb E_{\Omega_2} \max_{1\leq i \leq n} \left| x_iX_iY_i\right|,
\]
where we get the distribution of the independent random variables $Y_1,\ldots,Y_n$, say on $(\Omega_2,\Pro_2)$, by choosing $N(t)=t^p$. So, for all $x=(x_i)_{i=1}^n \in\R^n$,
\[
\mathbb E \max_{1\leq i \leq n} \left| x_iZ_i \right| \sim_p \|x\|_M \sim \mathbb E_{\Omega_1} \mathbb E_{\Omega_2} \max_{1\leq i \leq n} \left| x_iX_iY_i\right|.
\]
Therefore, to obtain (\ref{equ_ell_p_norm}), we just have to choose the distribution of $X_1,\ldots,X_n$ so that $X_1Y_1\stackrel{\mathcal D}{=} Z_1$.
Of course, here the distribution of $Z$ and $Y$ is known.

Before we continue, we observe that the transformation formula for integrals yields the following substitution rule for Stieltjes integrals:
\begin{equation}\label{eq:subst}
\int_a^b f\circ u  \dif(F\circ u) = \int_{u(a)}^{u(b)} f  \dif F,
\end{equation}
where $f$ is an arbitrary measurable function, $F$ is a non-decreasing function and $u$ is monotone on the interval $[a,b]$.

The following result is the converse to Theorem \ref{thm:orlicz}:

\begin{proposition}\label{PRO_inverse_maximum}
Let $M$ be a normalized Orlicz function with $M'(0)=0$. Let $X_1,\ldots,X_n$ are independent copies of a random variable $X$ with distribution
\begin{equation}\label{eq:distrX}
\mathbb P(X\leq t)=\int_{[1/t,\infty)} s\dif M'(s),\quad t> 0.
\end{equation}
Then, for all $x=(x_i)_{i=1}^n\in \mathbb{R}^n$,
\[
c_1\|x\|_M\leq \mathbb{E}\max_{1\leq i\leq n}|x_iX_i|\leq c_2\|x\|_M,
\]
where $c_1,c_2$ are constants independent of the Orlicz function $M$.
\end{proposition}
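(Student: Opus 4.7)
The plan is to reduce the statement to Theorem \ref{thm:orlicz} by showing that the Orlicz function $\tilde M$ associated via \eqref{EQU Orlicz function M} to the given distribution of $X$ coincides identically with $M$. In other words, we want to check that the map $M\mapsto (\text{distribution of }X)\mapsto \tilde M$ is the identity.

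Step one is to compute the tail of $X$. From \eqref{eq:distrX} we have $\mathbb P(X\geq y)=\int_{[0,1/y]} s\dif M'(s)$, and Stieltjes integration by parts together with $M'(0)=0$ yields
\[
\mathbb P(X\geq y)=\frac{1}{y}M'\Bigl(\frac{1}{y}\Bigr)-M\Bigl(\frac{1}{y}\Bigr).
\]
Setting $\phi(u):=uM'(u)-M(u)$ (so $\dif\phi(u)=u\,\dif M'(u)$ and $\phi(u)/u\to 0$ as $u\to 0^+$ by $M'(0)=0$), this says $\mathbb P(X\geq 1/t)=\phi(t)$ and $\mathbb P(X\geq y)=\phi(1/y)$.

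Step two is to evaluate the truncated expectation. Using the standard identity $\E[X;X\geq a]=a\mathbb P(X\geq a)+\int_a^\infty \mathbb P(X\geq y)\dif y$ with $a=1/t$, changing variables $u=1/y$ in the integral via \eqref{eq:subst}, and then integrating by parts,
\[
\int_{1/t}^\infty \mathbb P(X\geq y)\dif y=\int_0^t\frac{\phi(u)}{u^2}\dif u=-\frac{\phi(t)}{t}+\int_0^t\frac{\dif\phi(u)}{u}=-\frac{\phi(t)}{t}+M'(t).
\]
Adding $\tfrac1t\mathbb P(X\geq 1/t)=\phi(t)/t$, the $\phi(t)/t$-terms cancel and one is left with the very clean identity $\E[X\mathbbm 1_{X\geq 1/t}]=M'(t)$.

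Step three is then immediate: by Theorem \ref{thm:orlicz} the expectation $\E\max_i |x_iX_i|$ is equivalent to $\|x\|_{\tilde M}$ with
\[
\tilde M(s)=\int_0^s \E[X\mathbbm 1_{X\geq 1/t}]\dif t=\int_0^s M'(t)\dif t=M(s),
\]
which finishes the proof. The only genuinely delicate point is the bookkeeping of the two boundary contributions in the integration-by-parts steps: at $\infty$ one needs $\phi$ (and hence $sM'(s)$) to be integrable at $\infty$, which is guaranteed by the normalization $\int_0^\infty s\dif M'(s)=1$; at $0$ one needs $\phi(u)/u\to 0$, which follows from $M'(0)=0$ together with $M(u)/u\to M'(0)$. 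These two hypotheses are therefore exactly what is required to make the identity $\tilde M=M$ hold on the nose rather than only up to constants.
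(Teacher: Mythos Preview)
Your proof is correct and follows the same strategy as the paper: verify that the Orlicz function $\tilde M$ produced by Theorem~\ref{thm:orlicz} from the given distribution coincides with $M$, by establishing $\int_{X\geq 1/t} X\,\dif\Pro = M'(t)$. The paper obtains this identity in a single stroke via the Stieltjes substitution $x\mapsto 1/x$ (formula~\eqref{eq:subst}), whereas you reach it through the tail-integration formula and two integrations by parts; the two computations are equivalent, and your explicit discussion of the boundary terms at $0$ and $\infty$ is a nice touch.
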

\begin{proof}
We first observe that for an arbitrary random variable $X$ which is $\geq 0$ a.s., we have by \eqref{eq:subst}
\begin{equation*}
F_X(t):=\mathbb P(X\leq t)=\int_{(0,t]} \dif F_X(s)=-\int_{[1/t,\infty)}  \dif (F_X\circ u)(s),
\end{equation*}
where $u(s)=1/s$. If the distribution of $X$ is given by \eqref{eq:distrX}, we obtain
\[
\dif(F_X\circ u)(s)=-s\dif M'(s).
\]
Now we obtain, again by \eqref{eq:subst} and this identity
\begin{align*}
\int_0^s \int_{[1/t,\infty)} x\dif F_X(x)\dif t &= -\int_0^s \int_{(0,t]} \frac{1}{x} \dif(F_X\circ u)(x)\dif t \\
&= \int_0^s \int_{(0,t]} \dif M'(x)\dif t \\
&= M(s).
\end{align*}
The assertion of the theorem is now a consequence of Theorem \ref{thm:orlicz}.
\end{proof}

\begin{rmk}
The assumption that $M$ is normalized, i.e., $\int_0^\infty x\,\dif M'(x)=1$, assures us that the constants do not depend on $M$.
Note also that, as an immediate consequence of Proposition \ref{PRO_inverse_maximum}, by the integration by parts rule for Stieltjes integrals we obtain
\begin{equation}\label{equ_tail_distribution_function_of_X}
\mathbb P\left( X > t\right) = \int_{0}^{\frac{1}{t}} s \, \dif M'(s) = \frac{1}{t}M'\left(\frac{1}{t}\right) - M\left(\frac{1}{t}\right)
\end{equation}
for any $t>0$. If $M$ is ``sufficiently smooth'', we get that the density $f_X$ of $X$ is given by
\[
f_X(t)={t^{-3}}M''(t^{-1}).
\]
\end{rmk}

\begin{rmk}
To generate an $\ell_p$-norm in Proposition \ref{PRO_inverse_maximum}, i.e., to consider the case $M(t)=t^p$, one needs to pass to an equivalent Orlicz function so that the normalization condition is satisfied. The function $\widetilde M$ with $\widetilde M(t) = t^p$ on $[0, (p-1)^{-1/p}]$ which is then extended linearly does the trick.
\end{rmk}

\section{The case of $\ell_p$-norms}\label{general_p}

We will now prove the result which will then imply the main result, Theorem \ref{main}. Of course, in the proposition we could also assume $M\in\mathcal C^3$, but $M\in\mathcal C^2$ so that $M''$ is absolutely continuous on each compact subinterval of $(0,\infty)$ is sufficient.

\begin{proposition}\label{thm:p}
Let $M\in\mathcal C^2(0,\infty)$ be a normalized Orlicz function and $M''$ be absolutely continuous on each compact subinterval of $(0,\infty)$. Assume that $M'(0)=0=M''(T)$ for $T=M^{-1}(1)$ and that $M|_{[T,\infty)}$ is linear.
Let $1< p < \infty$ and $X, Y$ be two independent random variables distributed according the laws
\begin{align*}
\Pro(Y\geq y)&=\min (1,y^{-p})\quad\text{and } \\
\Pro(X\geq x)&=-M\Big(\frac{1}{x}\Big)+\frac{1}{x} M'\Big(\frac{1}{x}\Big)-\frac{1}{px^2}M''\Big(\frac{1}{x}\Big).
\end{align*}
Then the tail distribution function of $XY$ is
\begin{equation}\label{eq:probXY}
\Pro(XY\geq z)=\frac{1}{z}M'\Big(\frac{1}{z}\Big)-M\Big(\frac{1}{z}\Big), \quad z>0.
\end{equation}
\end{proposition}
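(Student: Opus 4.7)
The plan is to compute $\Pro(XY \geq z)$ directly by conditioning on $Y$, since $Y$ has an explicit density $f_Y(y) = py^{-p-1}\mathbbm{1}_{[1,\infty)}(y)$. Writing
\[
\Pro(XY\geq z)=\int_1^\infty \Pro\!\left(X\geq \tfrac{z}{y}\right) p y^{-p-1}\dif y
\]
and making the substitution $u=z/y$ transforms this into
\[
\Pro(XY\geq z)=\frac{p}{z^p}\int_0^z u^{p-1}\Pro(X\geq u)\dif u.
\]
This reduction is the core of the approach: it converts a product-distribution problem into a weighted integral of the tail of $X$.

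Next I would substitute $v=1/u$ in the resulting integral, which matches the $1/x$-style arguments appearing in the tail of $X$, and then insert the explicit formula
\[
\Pro(X\geq 1/v)=-M(v)+vM'(v)-\frac{v^2}{p}M''(v).
\]
This yields three integrals of the shape $\int_{1/z}^\infty v^{\alpha}M^{(k)}(v)\dif v$ for appropriate $\alpha,k$. The plan is to integrate by parts on the terms containing $M''$ and $M'$ to reduce everything to a single integral involving $M$ plus boundary terms evaluated at $v=1/z$ and $v\to\infty$. The boundary contribution at $v=1/z$ is expected to produce exactly $\tfrac{1}{z}M'(1/z)-M(1/z)$, while the remaining integrals must cancel.

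The main technical point is checking that the boundary terms at $v\to\infty$ vanish. Here I would use the hypothesis that $M$ is linear on $[T,\infty)$ (so $M''\equiv 0$ there and $M'$ is bounded), which forces $v^{1-p}M'(v)\to 0$ and $v^{-p}M(v)\to 0$ as $v\to\infty$ (using $p>1$). The absolute continuity of $M''$ on compact subintervals of $(0,\infty)$ legitimizes the integrations by parts. After the two IBP steps, the integrals involving $\int v^{-p-1}M(v)\dif v$ appear with opposite signs and cancel, leaving only the desired expression $\tfrac{1}{z}M'(1/z)-M(1/z)$.

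The main obstacle I anticipate is purely algebraic bookkeeping: coordinating the coefficients produced by each IBP step so that the surviving integral cancels and the surviving boundary terms combine correctly. An alternative, slightly cleaner route would be to differentiate both sides in $z$: the claimed equality has derivative $-z^{-3}M''(1/z)$, and this can be matched against the density of $XY$ computed from the Mellin-type convolution $f_{XY}(z)=\frac{p}{z^{p+1}}\int_0^z u^p f_X(u)\dif u$, using the formula for $f_X$ deduced from the tail of $X$ (which, incidentally, recovers exactly the density $f_X$ of Theorem \ref{main}). Both sides then vanish as $z\to\infty$ (the left by finiteness of $XY$, the right using $M'(0)=0$), which closes the argument.
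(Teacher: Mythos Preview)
Your proposal is correct, and the algebra you outline does close exactly as you predict: after the two integrations by parts the $\int v^{-p-1}M(v)\dif v$ terms cancel and the surviving boundary contributions give precisely $\tfrac{1}{z}M'(1/z)-M(1/z)$.

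The route differs from the paper's in one respect: you condition on $Y$, while the paper conditions on $X$. The paper writes
\[
\Pro(XY\geq z)=\Pro(X\geq z)+z^{-p}\int_0^z x^p f_X(x)\dif x,
\]
splits into the cases $z\le 1/T$ (where $f_X\equiv 0$ and the normalization gives $\Pro(X\geq z)=1$) and $z>1/T$, substitutes $u=1/x$, and does a single integration by parts whose boundary term at $T$ vanishes because $M''(T)=0$. Your approach trades one IBP for two but in exchange works uniformly for all $z>0$ without a case split, uses only the given tail of $X$ (never its density), and never needs $M'''$ explicitly. In particular, your argument does not directly invoke the hypothesis $M''(T)=0$; it enters only implicitly through the validity of the given tail formula for $X$. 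The alternative you sketch at the end---computing $f_{XY}$ via $\frac{p}{z^{p+1}}\int_0^z u^p f_X(u)\dif u$---is essentially what the paper does.
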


\begin{proof}
First note that the density function of $X$ is given by
\begin{equation}\label{eq:densityXY}
\begin{aligned}
f_X(x)&=\Big(1-\frac{2}{p}\Big)\frac{1}{x^3}M''\Big(\frac{1}{x}\Big)-\frac{1}{px^4}M'''\Big(\frac{1}{x}\Big), \\
\end{aligned}
\end{equation}
Inserting the expression for $\Pro(Y\geq y)$, we obtain
\begin{equation}
\label{eq:prodxyintermed}
\begin{aligned}
\Pro(XY\geq z)&=\int \mathbbm{1}_{\{XY\geq z\}} \dif \Pro=\int_0^\infty \Pro(Y\geq z/x) f_X(x)\dif x \\
&= \int_0^\infty \min(1,x^p/z^p)f_X(x)\dif x\\
&= \Pro(X\geq z)+z^{-p}\int_0^z x^p f_X(x)\dif x.
\end{aligned}
\end{equation}
Observe that, under the above assumptions and for $z\leq T^{-1}$, $\Pro(X\geq z)=1=z^{-1}M'(z^{-1})-M(z^{-1})$ and $f_X(z)=0$, since $\int_0^\infty x\dif M'(x)=TM'(T)-M(T)=1$. This yields \eqref{eq:probXY} for $z\leq 1/T$.
Thus we now assume $z>1/T$ and continue with calculating the integral $\int_0^z x^p f_X(x)\dif x$. We substitute $u=1/x$ and obtain
\begin{align*}
\int_0^z x^p f_X(x)\dif x &= \int_{z^{-1}}^\infty u^{-p-2} f_X(u^{-1})\dif u \\
&= \int_{z^{-1}}^T \Big(1-\frac{2}{p}\Big)u^{1-p}M''(u)-\frac{u^{2-p}}{p}M'''(u)\dif u.
\end{align*}
Partial integration further yields
\[
\int_0^z x^p f_X(x)\dif x = -\frac{u^{2-p}}{p}M''(u)\Big|_{z^{-1}}^T=\frac{1}{p}z^{p-2} M''(z^{-1}),
\]
since $M''(T)=0$. Combining equation \eqref{eq:prodxyintermed} with this result and the expression for the distribution of $X$, we obtain \eqref{eq:probXY} for $z>1/T$.
\end{proof}

Now we can finally prove our main theorem:
\begin{proof}[Proof of Theorem \ref{main}]
Let $M$ be the given Orlicz function and $(X_i)_{i=1}^n$ the given random variables on a measure space $(\Omega_1,\Pro_1)$. First note that by Proposition \ref{PRO_inverse_maximum} and the remark after it we get
\begin{equation}\label{eq:main1}
\|x\|_M\sim \E \max_{1\leq i\leq n} |x_i Z_i|,
\end{equation}
where $\Pro(Z\geq z)=z^{-1}M'(z^{-1})-M(z^{-1})$.
Secondly, by Theorem \ref{THM_lp_normen},
\begin{equation}\label{eq:main2}
\E_{\Omega_1} \| (x_iX_i)_{i=1}^n \|_p \lesssim \E_{\Omega_1}\E_{\Omega_2} \max_{1\leq i\leq n} |x_iX_iY_i| \lesssim (p-1)^{-1/p} \E_{\Omega_1} \| (x_iX_i)_{i=1}^n \|_p
\end{equation}
where the random variables $(Y_i)_{i=1}^n$, defined on $(\Omega_2,\Pro_2)$, are independent and $\log\gamma_{1,p}$-distributed. Since, by Proposition \ref{thm:p}, $X_1Y_1\stackrel{\mathcal D}{=} Z_1$, we combine \eqref{eq:main1} and \eqref{eq:main2} to obtain the assertion of the theorem.
\end{proof}

In case $p=2$, we obtain the following corollary:

\begin{cor}\label{cor_p=2}
Let $M\in\mathcal{C}^3(0,\infty)$ be a normalized Orlicz function with $M'(0)=0$ and $M'''(x)\leq 0$ for all $x\geq 0$ and assume that $M''(M^{-1}(1))=0$. Then
\begin{equation}\label{equ_density_for_p=2}
f_X(x)=-\frac{1}{2x^4} {M}'''\Big(\frac{1}{x}\Big)
\end{equation}
is a probability density and for all $x\in\R^n$,
\[
c_1\|x\|_{M}\leq\E\|(x_i X_i)_{i=1}^n\|_2\leq c_2\|x\|_{M},
\]
where $c_1,c_2$ are positive absolute constants and $X_1,\dots,X_n$ are iid with density $f_X$.
\end{cor}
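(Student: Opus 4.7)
The plan is to deduce the corollary immediately from Theorem \ref{main} by specializing $p = 2$. First I would check that the stated density matches: inserting $p = 2$ into the formula
\[
f_X(x) = \Big(1 - \frac{2}{p}\Big)\frac{1}{x^3} M''\Big(\frac{1}{x}\Big) - \frac{1}{px^4} M'''\Big(\frac{1}{x}\Big)
\]
from Theorem \ref{main} makes the first term drop out, leaving exactly $f_X(x) = -\frac{1}{2x^4} M'''(1/x)$, which is the density in the corollary. Moreover $(p-1)^{1/p} = 1$ at $p = 2$, so the lower constant from Theorem \ref{main} becomes absolute, matching what is claimed here.

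Next I would verify the remaining hypotheses of Theorem \ref{main}. The conditions $M \in \mathcal{C}^3$, $M'(0) = 0$, $M''(T) = 0$ for $T = M^{-1}(1)$, and the normalization $\int_0^\infty x \dif M'(x) = 1$ are all explicitly assumed in the corollary. The nonnegativity $f_X \geq 0$ required in Theorem \ref{main} is also automatic at $p = 2$, since only the term $-M'''(1/x)/(2x^4)$ survives, and this is nonnegative precisely by the assumption $M''' \leq 0$.

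The only hypothesis of Theorem \ref{main} not stated verbatim in the corollary is that $M|_{[T,\infty)}$ be linear, and I would show that this is forced by the other assumptions: since $M''' \leq 0$, the function $M''$ is non-increasing, and $M''(T) = 0$ then gives $M'' \leq 0$ on $[T,\infty)$; on the other hand, convexity of the Orlicz function $M$ yields $M'' \geq 0$, so $M'' \equiv 0$ on $[T,\infty)$ and $M$ is linear there. (Alternatively, as noted after Theorem \ref{main}, extending $M$ linearly past $T$ does not alter the Orlicz norm and is harmless.) With all hypotheses verified, Theorem \ref{main} applied at $p = 2$ produces the desired chain $c_1 \|x\|_M \leq \E\|(x_i X_i)_{i=1}^n\|_2 \leq c_2 \|x\|_M$ with absolute constants, which is the conclusion of the corollary. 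There is no genuine obstacle beyond this bookkeeping; the substance is entirely contained in Theorem \ref{main}, and the role of the hypothesis $M''' \leq 0$ is simply to encode the nonnegativity of $f_X$ in the form in which it appears at $p = 2$.
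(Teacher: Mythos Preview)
Your proposal is correct and matches the paper's approach: the paper presents Corollary~\ref{cor_p=2} as an immediate specialization of Theorem~\ref{main} to $p=2$, with no separate proof given. Your additional observation that the linearity of $M$ on $[T,\infty)$ is already forced by $M'''\leq 0$, $M''(T)=0$, and convexity is a nice touch that the paper leaves implicit.
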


Again, the normalization condition $\int_0^\infty y \dif  M'(y)=1$ assures that constants do not depend on $M$ and, in fact, is of the same form as the normalization condition in Theorem 2 from \cite{S}.
Note also that in the proof of Theorem \ref{thm:p} and its corollaries we need that $M''(T)=0$ for $T=M^{-1}(1)$. This, indeed, is no restriction, since Lemma \ref{lem:approx} in Section \ref{SEC_Appendix} shows that for any $2$-concave Orlicz function we can assume that $M''(T)=0$, otherwise we pass to an equivalent Orlicz function which has this property. Recall also that every Orlicz function which satisfies $M'''\leq 0$ is already $2$-concave. The authors do not know whether for an Orlicz function $M$ to be $2$-concave is equivalent (up to equivalent Orlicz functions) to have non-positive third derivative.

\begin{rmk}
Note that another proof of Corollary \ref{cor_p=2} via a Choquet-type representation theorem in the spirit of Lemma 7 in \cite{S} also yields the condition that the function $z\mapsto zM'\left(z\right) - M\left(z\right)$ has to be $2$-concave (or equivalently $M'''\leq 0$).
\end{rmk}

\section{Orlicz spaces that are isomorphic to subspaces of $L_1$}

As we will see, it is an easy consequence of Corollary \ref{cor_p=2} that the sequence of Orlicz spaces $\ell_M^n$, $n\in\N$, where $t\mapsto tM'(t)-M(t)$ is $2$-concave, embeds uniformly into $L_1$. Although we need $t\mapsto tM'(t)-M(t)$ to be a $2$-concave function, which seems a bit stronger than to assume that $M$ is $2$-concave, the simplicity of the representation (\ref{equ_density_for_p=2}) of the density that we need in our embedding has a strong advantage over the representation in Theorem 2  in \cite{S}, since it is much easier to handle.

We obtain the following result:

\begin{cor}\label{embedding}
Let $M$ be a normalized Orlicz function so that $M'(0)=0$ and $M'''\leq 0$. Then there exists a positive absolute constant $C$ (independent of $M$) such that for all $n\in\N$ there is a subspace $Y_n$ of $L_1$ with $\dim(Y_n)=n$ and
\[
d(\ell_M^n,Y_n) \leq C,
\]
i.e., $(\ell_M^n)_n$ embeds uniformly into $L_1$.
\end{cor}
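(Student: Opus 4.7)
The plan is to combine Corollary \ref{cor_p=2} with Khintchine's inequality in order to realize $\ell_M^n$ as the span of a specific sequence of functions in $L_1[0,1]$.

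First I would perform a small technical reduction: Corollary \ref{cor_p=2} requires the extra condition $M''(T)=0$ at $T=M^{-1}(1)$, which is not included in the hypotheses. Using Lemma \ref{lem:approx} from the appendix, I would replace $M$ by an equivalent normalized Orlicz function (still $\mathcal C^3$ with $M'(0)=0$ and $M'''\le 0$) for which this boundary condition holds. Equivalent Orlicz functions yield isomorphic Orlicz norms with an absolute constant of equivalence, so this changes the eventual Banach--Mazur distance only by an absolute factor.

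Next, I would invoke Theorem \ref{thm_measure} to realize on $([0,1],\mathfrak B_\R,\lambda)$ an iid sequence $X_1,\dots,X_n$ with density $f_X(x)=-\tfrac{1}{2x^4}M'''(1/x)$, together with an independent Rademacher sequence $\varepsilon_1,\dots,\varepsilon_n$. Corollary \ref{cor_p=2} then gives, with absolute constants,
\[
\E\Big(\sum_{i=1}^n x_i^2 X_i^2\Big)^{1/2}\sim \|x\|_M,\qquad x\in\R^n.
\]
Applying Khintchine's inequality conditionally on $(X_i)_{i=1}^n$, with $a_i=x_iX_i$, and then taking expectation and using Fubini, I obtain
\[
\E\Big|\sum_{i=1}^n x_i\varepsilon_i X_i\Big|\sim \E\Big(\sum_{i=1}^n x_i^2 X_i^2\Big)^{1/2}\sim \|x\|_M,
\]
again with absolute constants. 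Note that each $\varepsilon_iX_i$ lies in $L_1[0,1]$, because the normalization $\int_0^\infty x\,\dif M'(x)=1$ together with the formula for the tail of $X$ ensures $\E|X|<\infty$.

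To finish, I would set $f_i:=\varepsilon_iX_i\in L_1[0,1]$ and define $Y_n:=\mathrm{span}\{f_1,\dots,f_n\}$. The operator $T\colon \ell_M^n\to Y_n$ with $Te_i=f_i$ satisfies $\|Tx\|_{L_1}\sim\|x\|_M$, so it is injective (hence $\dim Y_n=n$) and $\|T\|\,\|T^{-1}\|\le C$ for an absolute $C$, proving $d(\ell_M^n,Y_n)\le C$ uniformly in $n$. The only nontrivial point is making sure all constants are genuinely independent of $M$, which is the reason for the normalization assumption and the reduction step above; the Khintchine step and the realization on $[0,1]$ are entirely standard.
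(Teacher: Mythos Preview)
Your proposal is correct and follows essentially the same route as the paper: invoke Theorem \ref{thm_measure} to realize the Rademacher variables and the $X_i$'s independently on $[0,1]$, apply Khintchine's inequality to reduce $\E\big|\sum a_i\varepsilon_iX_i\big|$ to $\E\big(\sum a_i^2X_i^2\big)^{1/2}$, and then use Corollary \ref{cor_p=2}. Your additional reduction via Lemma \ref{lem:approx} to ensure $M''(T)=0$ is exactly the point the paper addresses in the paragraph following Corollary \ref{cor_p=2} rather than inside the proof of Corollary \ref{embedding}; otherwise the arguments coincide.
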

\begin{proof}
The proof is a simple consequence of Corollary \ref{cor_p=2}, Khintchine's inequality and Theorem \ref{thm_measure}. Given $n\in\N$, we let $\mu_1=\dots=\mu_n$ be the distribution of Rademacher functions, that is,
\[
\mu_i(\{1\})=\mu_i(\{-1\})=1/2,\quad 1\leq i\leq n.
\]
Additionally, we let $\mu_{n+1}=\dots = \mu_{2n}$ be the distribution of $X_i$ given in Corollary  \ref{cor_p=2}. Then we apply Theorem \ref{thm_measure} to the finite sequence $(\mu_i)_{i=1}^{2n}$ of probability measures to get independent random variables $r_1,\dots,r_n,X_1,\dots X_n$ defined on the unit interval $[0,1]$ such that the distribution of $r_i$ is $\mu_i$ and the distribution of $X_i$ is $\mu_{n+i}$ for all $1\leq i\leq n$. Then the asserted isomorphism is given by
\[
\Psi_n:\ell_M^n \to L_1[0,1], \quad a\mapsto \sum_{i=1}^n a_i r_i(\cdot) X_i(\cdot).
\]
Thus, applying Khintchine's inequality, for any $a=(a_i)_{i=1}^n\in\R^n$,
\begin{align*}
\| \Psi_n(a) \|_{L_1} & = \int_0^1 \Big| \sum_{i=1}^n a_i r_i(t) X_i(t)\Big| \dif t  \\
& = \int_{\R^n} \int_{\{-1,1\}^n} \Big| \sum_{i=1}^n a_i \varepsilon_i x_i  \Big|  \dif(\mu_1\otimes\dots\otimes\mu_n)(\varepsilon)\dif (\mu_{n+1}\otimes\cdots\otimes\mu_{2n})(x)\\
& \sim \int_{\R^n} \Big( \sum_{i=1}^n |a_i x_i|^2 \Big)^{1/2} \dif (\mu_{n+1}\otimes\dots\otimes\mu_{2n})(x) \\
& = \int_{[0,1]} \Big( \sum_{i=1}^n |a_i X_i(t)|^2 \Big)^{1/2} \dif t \\
& \sim \|a\|_M,
\end{align*}
where we used Corollary \ref{cor_p=2} in the last step.
\end{proof}

\section{The general result}

Following the ideas described in Section \ref{sec:infinity}, we now generalize our results to find an inequality of the form
\[
\frac{1}{C} \|x\|_M \leq \mathbb E \|(x_iX_i)_{i=1}^n\|_N \leq C \|x\|_M
\]
for a general Orlicz function\ $N$. For each normalized Orlicz function $L$, we write
\[\overline{F}_L(t)=\int_{0}^{1/t}s \dif L'(s) = \frac{1}{t}L'\left(\frac{1}{t}\right) - L\left(\frac{1}{t}\right)\]
and call this function the tail distribution function associated to $L$, motivated by Proposition \ref{PRO_inverse_maximum} and equation \eqref{equ_tail_distribution_function_of_X}.

\begin{proposition}\label{thm:general}
Let $M,N$ be normalized Orlicz functions with $M'(0)=N'(0)=0$.
\begin{enumerate}[(i)]
\item If there exists a probability measure $\mu$ on $(0,\infty)$ such that
\begin{equation}\label{eq:mult_convolution}
\overline F_M(t)=\int_{(0,\infty)} \overline F_N(t/x) \dif \mu(x),
\end{equation}
then, for all $x=(x_i)_{i=1}^n\in \mathbb{R}^n$,
\[
c_1\|x\|_M\leq \mathbb{E}\|(x_iX_i)_{i=1}^n\|_N\leq c_2\|x\|_M,
\]
where $c_1,c_2$ are positive absolute constants and $X_1,\dots,X_n$ are iid random variables with distribution $\mu$.
\item If there exist iid random variables $X_1,\dots,X_n$ with distribution $\mu$ on $(0,\infty)$ such that 
\[
c_1\|x\|_M\leq \mathbb{E}\|(x_iX_i)_{i=1}^n\|_N\leq c_2\|x\|_M,
\]
where $c_1,c_2$ are positive absolute constants, then there exists an Orlicz function $\widetilde{M}$  equivalent to $M$ such that
\begin{equation*}
\overline F_{\widetilde M}(t)=\int_{(0,\infty)} \overline F_N(t/x) \dif \mu(x).
\end{equation*}
\end{enumerate}
\end{proposition}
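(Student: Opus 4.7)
The plan is to use the same multiplicative device that drives Sections \ref{sec:infinity} and \ref{general_p}: introduce auxiliary random variables $Y_1,\dots,Y_n$, iid copies of a random variable $Y$ with tail distribution function $\Pro(Y>t)=\overline F_N(t)$, defined on a probability space $(\Omega_2,\Pro_2)$ independent of the $X_i$'s on $(\Omega_1,\Pro_1)$. Since $\overline F_N$ is precisely the tail of the distribution \eqref{eq:distrX} attached to $N$, Proposition \ref{PRO_inverse_maximum} applied to the normalized Orlicz function $N$ yields, for every fixed $\omega\in\Omega_1$,
\[
\mathbb E_{\Omega_2}\max_{1\leq i\leq n}|x_iX_i(\omega)Y_i|\sim \|(x_iX_i(\omega))_{i=1}^n\|_N,
\]
and integrating over $\Omega_1$ gives the key identity
\[
\mathbb E\max_{1\leq i\leq n}|x_iX_iY_i|\sim \mathbb E_{\Omega_1}\|(x_iX_i)_{i=1}^n\|_N. \qquad (\ast)
\]

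For part (i), the independence of the $X_i$'s and $Y_i$'s combined with the hypothesis \eqref{eq:mult_convolution} yields
\[
\Pro(X_1Y_1>z)=\int_{(0,\infty)}\Pro(Y_1>z/x)\,\dif\mu(x)=\int_{(0,\infty)}\overline F_N(z/x)\,\dif\mu(x)=\overline F_M(z).
\]
Thus the iid variables $X_iY_i$ carry exactly the distribution attached to $M$ via \eqref{eq:distrX}, and a second application of Proposition \ref{PRO_inverse_maximum}, this time to $M$, gives $\mathbb E\max_i|x_iX_iY_i|\sim\|x\|_M$. Combined with $(\ast)$ this produces the desired two-sided inequality.

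For part (ii), substituting the hypothesis $\mathbb E\|(x_iX_i)_{i=1}^n\|_N\sim\|x\|_M$ into $(\ast)$ gives $\mathbb E\max_i|x_iX_iY_i|\sim\|x\|_M$. Since the $X_iY_i$ are iid and a.s.\ positive, Theorem \ref{thm:orlicz} attaches to them a canonical Orlicz function
\[
\widetilde M(s)=\int_0^s\int_{1/t\leq X_1Y_1}X_1Y_1\,\dif\Pro\,\dif t
\]
with $\mathbb E\max_i|x_iX_iY_i|\sim\|x\|_{\widetilde M}$, so that $\|x\|_M\sim\|x\|_{\widetilde M}$ holds for every $n$ and every $x\in\R^n$. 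Norm equivalence uniform in the dimension then forces $M$ and $\widetilde M$ to be equivalent as Orlicz functions near zero (cf.\ Proposition 4.a.5 in \cite{LT1}), which is all that is needed. Finally, by the remark after Proposition \ref{PRO_inverse_maximum}, the tail associated with $\widetilde M$ is just the tail of $X_1Y_1$, and by independence this equals $\int\overline F_N(t/x)\,\dif\mu(x)$, establishing the integral representation. The one delicate step is the passage from uniform norm equivalence to equivalence of the generating Orlicz functions themselves; everything else is a clean composition of the representation theorem (Theorem \ref{thm:orlicz}) and its inverse (Proposition \ref{PRO_inverse_maximum}).
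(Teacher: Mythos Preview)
Your proof is correct and follows essentially the same approach as the paper: introduce auxiliary variables $Y_i$ with tail $\overline F_N$, use Proposition \ref{PRO_inverse_maximum} to pass between $\|\cdot\|_N$ and the maximum, and identify the tail of $X_1Y_1$ with $\overline F_M$ (for part (i)) or with $\overline F_{\widetilde M}$ (for part (ii)). The only organizational difference is that you isolate the identity $(\ast)$ up front and reuse it in both parts, whereas the paper derives it inline each time and in part (ii) first introduces a separate variable $Z_1$ with the prescribed tail before noting $Z_1\stackrel{\mathcal D}{=}X_1Y_1$; these are cosmetic differences only.
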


\begin{proof} 
(i): Note that condition \eqref{eq:mult_convolution} guarantees that we can follow the line of argument in the proof of Theorem \ref{main}. Indeed, we
choose independent sequences of iid random variables $(Z_1,\dots,Z_n)$ defined on $(\Omega_1,\Pro_1)$ and $(Y_1,\dots,Y_n)$ defined on $(\Omega_2,\Pro_2)$ with tail distribution functions $\overline F_M$ and $\overline F_N$, respectively. By Proposition \ref{PRO_inverse_maximum} we have
\[\|x\|_M\sim \E_{\Omega_1} \max_{1\leq i\leq n} |x_i Z_i| \quad \textrm{and} \quad\|x\|_N\sim \E_{\Omega_2} \max_{1\leq i\leq n} |x_i Y_i|\]
for all $(x_i)_{i=1}^n\in\R^n$.
By \eqref{eq:mult_convolution}, $X_1Y_1\stackrel{\mathcal D}{=} Z_1$, since for all $t>0$
\begin{equation}\label{eq:productdistribution}
\begin{aligned}
\Pro(Z_1> t)&=\overline F_M(t)=\int_{(0,\infty)} \overline F_N(t/x)\dif \mu(x) \\
&=\int_{(0,\infty)}\Pro(xY_1> t)\dif \mu(x)=\Pro(X_1Y_1> t).
\end{aligned}
\end{equation}
Therefore,
\begin{align*}
\|x\|_M &\sim \E_{\Omega_1} \max_{1\leq i\leq n} |x_i Z_i|=\E_{\Omega}\E_{\Omega_2} \max_{1\leq i\leq n} |x_i X_iY_i|\\
&=\int_{\Omega}\E_{\Omega_2} \max_{1\leq i\leq n} |x_i X_i(\omega)Y_i|\dif \Pro(\omega)\\
&\sim \int_{\Omega}\|(x_i X_i(\omega))_{i=1}^n\|_N\dif \Pro(\omega) \\
&=\E_{\Omega}\|(x_i X_i)_{i=1}^n\|_N.
\end{align*}
(ii): Assume that
\begin{align*}
\E\|(x_i X_i)_{i=1}^n\|_N \sim \|x\|_M
\end{align*}
for iid random variables $X_1,\dots,X_n$ with distribution $\mu$. Define the tail distribution function $\overline{F}$ by
\begin{equation*}
\overline F(t)=\int_{(0,\infty)} \overline F_N(t/x) \dif \mu(x)
\end{equation*}
and choose a sequence of iid random variables $(Z_1,\dots,Z_n)$ defined on $(\Omega_1,\Pro_1)$ with tail distribution function $\overline F$ and sequence $(Y_1,\dots,Y_n)$ independent of $(X_1,\dots,X_n)$ defined on $(\Omega_2,\Pro_2)$ with tail distribution function $\overline F_N$. By construction, $Z_i$ has the same distribution as $X_iY_i,i=1,\dots,n$. Now define the Orlicz function $\widetilde{M}$ by
\[
\widetilde M(s)=\int_0^s \int_{1/t\leq |Z_1|} |Z_1|\,\dif \mathbb P_1 \dif t.
\]
By Theorem \ref{thm:orlicz}, $\|x\|_{\widetilde M}\sim \E_{\Omega_1} \max_{1\leq i\leq n} |x_i Z_i|$ and, therefore, we obtain
\begin{align*}
\|x\|_M&\sim \E_{\Omega}\|(x_i X_i)_{i=1}^n\|_N= \int_{\Omega}\|(x_i X_i(\omega))_{i=1}^n\|_N\dif \Pro(\omega)\\
&\sim\int_{\Omega}\E_{\Omega_2} \max_{1\leq i\leq n} |x_i X_i(\omega)Y_i|\dif \Pro(\omega)=\E_{\Omega}\E_{\Omega_2} \max_{1\leq i\leq n} |x_i X_iY_i|\\
&=\E_{\Omega_1} \max_{1\leq i\leq n} |x_i Z_i|\sim \|x\|_{\widetilde M}.
\end{align*}
Thus, $M$ and $\widetilde M$ are equivalent  \cite[Proposition 4.a.5]{LT1}.
\end{proof}

Condition \eqref{eq:mult_convolution} seems hard to check for general Orlicz functions $M$ and $N$. However, in the special case that we have $N(t)=t^2$ on $[0,1]$ which is then extended linearly, condition \eqref{eq:mult_convolution} is equivalent to the positivity of the function $f_X$ in \eqref{equ_density_for_p=2}. Indeed,
\[\overline F_M(t)=\int_{(0,\infty)} \overline F_N(t/x)\dif \mu(x)=\int_{(0,\infty)} \min(1,x^2/t^2)\dif \mu(x).\]
Note that
\[\int_{(0,\infty)} \min(1,x^2z^2)\dif \mu(x)=\overline F_M\left(1/z\right)=zM'\left(z\right) - M\left(z\right)\]
is obviously a $2$-concave function in $z$ as an average over such functions, in correspondence with the discussion before. On the other hand, Corollary \ref{cor_p=2} can be restated in the following form that shows that the converse is also true: if $z\mapsto zM'\left(z\right) - M\left(z\right)$ is $2$-concave under the conditions stated in Corollary \ref{cor_p=2}, the tail distribution function $\overline F_M$ has a representation of the form \eqref{eq:mult_convolution} and the distribution $\mu$ is explicitly given by the density
\begin{equation*}
f(x)=-\frac{1}{2x^4} {M}'''\Big(\frac{1}{x}\Big).
\end{equation*}

\section{Appendix}\label{SEC_Appendix}

We provide some approximation results for Orlicz functions that we need in this paper and which might be interesting in further applications.

\begin{lemma}
Let $M\in \mathcal{C}^2(0,\infty)$ be an Orlicz function with $M'(0)=0$ and such that $M''$ is decreasing. Then $M$ is $2$-concave.
\end{lemma}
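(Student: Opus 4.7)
The plan is to reduce $2$-concavity of $M$, i.e.\ concavity of $g(t):=M(\sqrt{t})$, to a pointwise differential inequality for $M$, and then derive that inequality from $M'(0)=0$ together with monotonicity of $M''$.

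First I would compute, for $t>0$,
\[
g'(t)=\frac{M'(\sqrt{t})}{2\sqrt{t}},\qquad g''(t)=\frac{M''(\sqrt{t})}{4t}-\frac{M'(\sqrt{t})}{4t^{3/2}}.
\]
Since $M\in\mathcal C^2(0,\infty)$, both derivatives exist on $(0,\infty)$, and concavity of $g$ on $(0,\infty)$ is equivalent to $g''\leq 0$, which, setting $s=\sqrt{t}$, is in turn equivalent to
\[
sM''(s)\leq M'(s),\qquad s>0.
\]

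Next I would establish this inequality. Using $M'(0)=0$ and the fundamental theorem of calculus (valid because $M''$ is monotone, hence locally integrable on $[0,s]$), write
\[
M'(s)=\int_0^s M''(u)\dif u.
\]
Because $M''$ is decreasing on $(0,\infty)$, we have $M''(u)\geq M''(s)$ for every $u\in(0,s]$, so
\[
M'(s)=\int_0^s M''(u)\dif u\geq \int_0^s M''(s)\dif u=sM''(s),
\]
which is exactly the required inequality.

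Putting the two steps together gives $g''\leq 0$ on $(0,\infty)$, hence $g$ is concave there; continuity of $g$ at $0$ (which follows from continuity of $M$ at $0$ together with $M(0)=0$) extends the concavity to $[0,\infty)$, so $M$ is $2$-concave. The argument is essentially one-line once the right reformulation is in place; the only minor technical point to handle carefully is the behaviour at $0$, namely that the representation $M'(s)=\int_0^s M''(u)\dif u$ is legitimate. This is ensured by $M'(0)=0$ and the fact that, as a decreasing positive function, $M''$ is integrable on $(0,s]$ because its antiderivative $M'$ is finite at $s$. No further obstacle is expected.
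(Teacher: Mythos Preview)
Your proof is correct and follows essentially the same route as the paper: both reduce $2$-concavity to the pointwise inequality $sM''(s)\le M'(s)$ and then deduce this from $M'(0)=0$ together with the monotonicity of $M''$. The only cosmetic difference is that the paper invokes the mean value theorem on $[\varepsilon,x]$ and lets $\varepsilon\to 0$, whereas you use the integral representation $M'(s)=\int_0^s M''(u)\dif u$; these are equivalent arguments, and your version has the mild advantage of making the role of $M'(0)=0$ and the integrability of $M''$ near $0$ explicit.
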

\begin{proof}
Recall that $M$ is $2$-concave if and only if $xM''(x) \leq M'(x)$. For all $\varepsilon\in(0,x)$, there exists $\xi_\varepsilon\in (\varepsilon,x)$ such that
\[
M'(x)=M'(\varepsilon)+(x-\varepsilon)M''(\xi_\varepsilon).
\]
Since $M''$ is decreasing, we get
\[
M'(x)\geq M'(\varepsilon)+(x-\varepsilon)M''(x),
\]
and so, for $\varepsilon\rightarrow 0$, $M'(x)\geq xM''(x)$, which means that $M$ is $2$-concave.
\end{proof}

\begin{lemma}\label{lem:approx}
Let $M\in \mathcal{C}^2(0,M^{-1}(1))$ be an Orlicz function that is linear to the right of $T:=M^{-1}(1)$. Then, for all constants $c>1$, there exists an Orlicz function $N$ such that
\begin{enumerate}
  \item $N''(T) = 0$
  \item \label{it:two} $N(t) \leq M(t) \leq c N(t)$ for all $t\in[0,\infty)$.
\end{enumerate}
Additionally, if $M''$ is decreasing, we can choose $N$ such that $N''$ is decreasing.
\end{lemma}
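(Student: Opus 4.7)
The plan is to modify $M$ only on a small interval $[T-\delta,T]$ by smoothly damping $M''$ down to $0$ at $T$, while extending the result on $[T,\infty)$ by the affine function with the same slope as $M$'s given linear extension. Concretely, given $c>1$, I would pick a small $\delta>0$ (to be chosen at the end) and a continuous non-increasing cutoff $\varphi_\delta\colon[0,T]\to[0,1]$ with $\varphi_\delta\equiv 1$ on $[0,T-\delta]$ and $\varphi_\delta(T)=0$ (e.g.\ piecewise linear on $[T-\delta,T]$). Then set
\[
 N''(t):=\varphi_\delta(t)\,M''(t),\qquad t\in(0,T),
\]
reconstruct $N$ on $[0,T]$ by integrating twice with $N(0)=0$ and $N'(0^+)=M'(0^+)$, and extend $N$ to $[T,\infty)$ as $N(t):=N(T)+M'(T^+)(t-T)$, where $M'(T^+)$ is the slope of the given linear piece of $M$. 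Because $N''\ge 0$ on $[0,T]$ and the slope jump at $T$ satisfies $M'(T^+)\ge M'(T^-)\ge N'(T^-)$, the function $N$ is convex; since $N\equiv M$ on $[0,T-\delta]$, it is a strictly positive Orlicz function. By construction, $N''(T^\pm)=0$, which gives (1).

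The lower bound $N\le M$ in (2) is then automatic: on $[0,T-\delta]$ there is equality; on $[T-\delta,T]$ one has $N''\le M''$ with matching data at $T-\delta$, so integrating twice yields $N\le M$; on $[T,\infty)$ both functions are linear with the same slope, so $N(T)\le M(T)$ propagates. The delicate direction is $M\le cN$. The key quantity to control is
\[
 \varepsilon(\delta):=\int_{T-\delta}^T M''(s)\dif s \;=\; M'(T^-)-M'(T-\delta),
\]
which tends to $0$ as $\delta\to 0^+$ because the convex $M$ has a finite left limit $M'(T^-)$ at $T$ (it is dominated near $T$ by the linear extension). From $\varepsilon(\delta)\to 0$ one obtains $M'(T^-)-N'(T^-)\le\varepsilon(\delta)$ and then $M(T)-N(T)\le\delta\,\varepsilon(\delta)\to 0$, so $N(T)\to M(T)=1$.

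Next I would bound $M/N$ in three regimes: (i) on $[0,T-\delta]$, $M/N\equiv 1$; (ii) on $[T-\delta,T]$, use $M(t)\le M(T)=1$ and $N(t)\ge N(T-\delta)=M(T-\delta)$ to get $M/N\le 1/M(T-\delta)$, which tends to $1$ by continuity of $M$; (iii) on $[T,\infty)$, since the affine parts share the slope $M'(T^+)$, the difference $M(t)-N(t)$ is the constant $M(T)-N(T)$, hence $M/N\le M(T)/N(T)=1/N(T)\to 1$. Taking $\delta$ sufficiently small makes all three bounds strictly less than $c$ simultaneously, which gives $M\le cN$ uniformly.

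For the additional assertion the construction preserves monotonicity for free: $\varphi_\delta$ is nonnegative and non-increasing by design, so if $M''$ is non-increasing, the pointwise product $N''=\varphi_\delta\,M''$ of two nonnegative non-increasing functions is non-increasing on $(0,T)$; since $N''\equiv 0$ on $[T,\infty)$ with $N''(T^-)=0$, the monotonicity extends globally. The main obstacle I anticipate is the three-regime bookkeeping for $M\le cN$: all three upper bounds must be driven below $c$ at once, and this rests entirely on $\varepsilon(\delta)\to 0$, which is the only nontrivial use of the hypothesis $M\in\mathcal C^2(0,T)$.
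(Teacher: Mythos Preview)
Your construction is correct and follows essentially the same route as the paper: damp $M''$ to zero on a short interval $[T-\delta,T]$, integrate twice with matching initial data, and extend linearly on $[T,\infty)$ with the slope of $M$'s linear piece. The paper phrases the modification more loosely (``extend such that $N''\le M''$ and $N''(T)=0$''), whereas your explicit multiplicative cutoff $N''=\varphi_\delta M''$ has the advantage that the ``Additionally'' clause is automatic, since a product of nonnegative non-increasing functions is non-increasing.

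The only real difference is in the bookkeeping for $M\le cN$. The paper estimates $M(t)-N(t)\le T\delta^2\max_{[T(1-\delta),T]}M''$ on the middle interval and then compares with $(c-1)N(T(1-\delta))$; you instead use the cruder but perfectly adequate bound $M(t)\le 1$, $N(t)\ge M(T-\delta)$ there, and on $[T,\infty)$ the decreasing ratio of two affine functions with equal slope. Both arguments reduce to continuity of $M$ at $T$; your version even sidesteps a small subtlety in the paper's estimate (the hypothesis $M\in\mathcal C^2(0,T)$ does not by itself force $M''$ to be bounded near $T$). One minor remark: your regimes (ii) and (iii) do not actually need the quantity $\varepsilon(\delta)$ at all, since $N(T)\ge N(T-\delta)=M(T-\delta)\to 1$ already follows from continuity, so the claim that everything ``rests entirely on $\varepsilon(\delta)\to 0$'' overstates its role.
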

\begin{proof}
We let $\delta\in(0,1)$ and define $N$ as follows: We set $N(t)=M(t)$ for all $t\leq T(1-\delta)$ and we extend $M$ to $[0,T]$ such that $N''$ is smooth, decreasing, $N''(t)\leq M''(t)$ for $t\in [0,T)$ and $N''(T)=0$. For $t>T$, we define $N$ linearly with the same slope as $M$.

We have to show property \eqref{it:two}. The inequality $N(t)\leq M(t)$ follows from the construction for all $t\in[0,\infty)$. The second inequality is trivial for $t\leq T(1-\delta)$ since for such $t$, $M(t)=N(t)$.
Next, we explore the case $t\in [T(1-\delta),T]$. If we choose $t$ in this interval, by the above definition of $N$,
\begin{align*}
0 &\leq 	M(t)-N(t) \\
  &=		\int_{T(1-\delta)}^t \int_{T(1-\delta)}^s M''(x)-N''(x)\dif x\dif s \\
  &\leq 	T\delta^2 \max_{x\in[T(1-\delta),T]} \big(M''(x)-N''(x)\big) \\
  &\leq		T\delta^2 \max_{x\in[T(1-\delta),T]} M''(x).
\end{align*}
Now we choose $\delta$ such that $T\delta^2 \max_{x\in[T(1-\delta),T]} M''(x)\leq (c-1)M(T(1-\delta))$. This is possible, since $\max_{x\in[T(1-\delta),T]} M''(x)$ is an increasing function of $\delta$ and $M(T(1-\delta))$ is  a decreasing function of $\delta$. Then we obtain for $t\in[T(1-\delta),T]$
\begin{align*}
M(t)	&= 		N(t)+M(t)-N(t) \\
		&\leq 	N(t)+(c-1)M(T(1-\delta)) \\
		&=		N(t)+(c-1)N(T(1-\delta)) \\
		&\leq	cN(t).
\end{align*}
This is property \eqref{it:two} for $t\in [T(1-\delta),T]$. Since for $t\geq T$, the difference $M(t)-N(t)$ is constant by definition of $N$, and the two Orlicz functions $M$ and $N$ are both increasing, the inequality $M(t)\leq cN(t)$ also holds for $t\geq T$ by the following simple calculation:
\begin{align*}
M(t)	&= 		N(t)+M(t)-N(t) \\
		&=		N(t)+M(T)-N(T) \\
		&\leq	N(t)+(c-1)N(T) \\
		&\leq	cN(t).
\end{align*}
This completes the proof.
\end{proof}

Figure \ref{figure1} illustrates the choice of the equivalent Orlicz function in the proof of Lemma \ref{lem:approx} which has the desired properties.

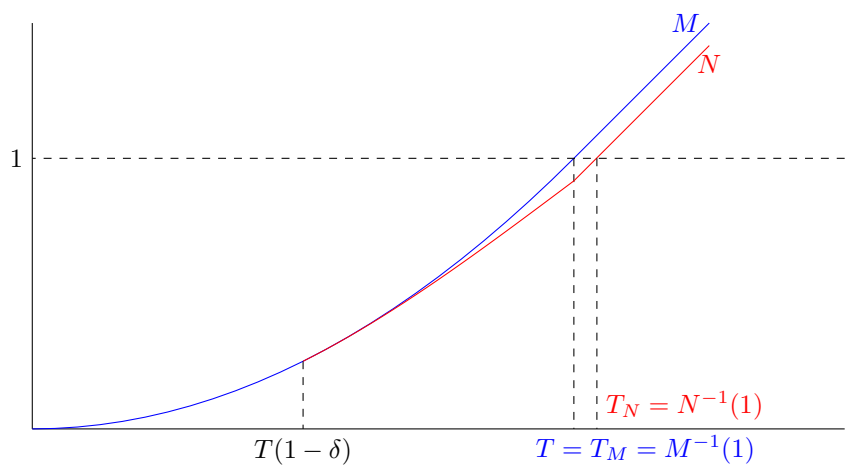
\begin{figure}
\begin{center}
\begin{tikzpicture}[scale=1.8]
\draw (0,0)--(6,0);
\draw (0,0)--(0,3);
\draw[domain=0:4,blue] plot(\x,\x^2/8);
\draw[blue](4,2)--(5,3) node[left,blue]{$M$};
\draw[dashed] (6,2)--(0,2) node[left]{$1$};
\draw[dashed] (4,2)--(4,0);
\draw(3.65,-0.15) node[anchor=west,blue]{$T=T_M=M^{-1}(1)$};
\draw[dashed] (2,0.5)--(2,0) node[below]{$T(1-\delta)$};
\draw[domain=2:4,red] plot(\x,\x^2/4-\x^3/48-\x/4+1/6);
\draw[red](4,1.8333)--(5,2.8333) node[below,red]{$N$};
\draw[dashed] (4.17,2)--(4.17,0) node[anchor=south west,red]{$T_N=N^{-1}(1)$};
\end{tikzpicture}
\end{center}
\caption{Approximation of the Orlicz function $M$}
\label{figure1}
\end{figure}

\begin{rmk}
Let $M$ and $N$ be as in Lemma \ref{lem:approx}.
In order to apply this lemma to Proposition \ref{thm:p}, we have to pass once again to an equivalent Orlicz function $\widetilde{N}$, a multiple of the function $N$ constructed in Lemma \ref{lem:approx} (see Figure \ref{figure1}), to assure $M^{-1}(1)=\widetilde{N}^{-1}(1)$ and, hence, that the function $\widetilde{N}$ is ``smooth'' up to the point $\widetilde{N}^{-1}(1)$.
\end{rmk}

\proof[Acknowledgements]
The last named author would like to thank Gideon Schechtman and Carsten Sch\"utt for helpful discussions.

\bibliographystyle{plain}
\bibliography{distribution_of_random_variables_and_orlicz_function}

\end{document}